\numberwithin{equation}{section}
\theoremstyle{plain}
\newtheorem{thm}{Theorem}[section]
\newtheorem{cor}[thm]{Corollary}
\newtheorem{lem}[thm]{Lemma}
\newtheorem{prop}[thm]{Proposition}
\newtheorem{exa}[thm]{Example}
\newtheorem*{problem*}{Problem}
\theoremstyle{definition}
\newtheorem{Def}[thm]{Definition}
\theoremstyle{remark}
\newtheorem*{claim*}{Claim}
\newtheorem*{exa*}{Example}
\newtheorem*{rem*}{Remark}
\newtheorem*{rems*}{Remarks}
\newtheorem*{fact*}{Fact}
\newtheorem*{Def*}{Definition}
\DeclareMathOperator{\Gal}{Gal}
\def\Q{{\mathbb Q}}
\def\Z{{\mathbb Z}}
\def\G{{\mathbb G}}
\def\Gal{{\rm Gal}}
\def\ker {{\rm  Ker}}
\DeclareFontFamily{U}{wncy}{}
\DeclareFontShape{U}{wncy}{m}{n}{%
<5>wncyr5%
<6>wncyr6%
<7>wncyr7%
<8>wncyr8%
<9>wncyr9%
<10>wncyr10%
<11>wncyr10%
<12>wncyr6%
<14>wncyr7%
<17>wncyr8%
<20>wncyr10%
<25>wncyr10}{}
\DeclareMathAlphabet{\cyr}{U}{wncy}{m}{n}
\begin{document}

\title[The sum of two integral squares] {On the sum of two integral squares in certain
quadratic fields}
\author{Dasheng Wei}

\address{ Academy of Mathematics and System Science, CAS, Beijing
100190, P.R. China and Mathematisches Institut der Universit\"at M\"unchen Theresienstr. 39, D-80333 M\"unchen}

\email{dshwei@amss.ac.cn}

\date{\today}

\maketitle{}

\bigskip
\section*{\it Abstract}

In this note, we give a necessary and sufficient condition for determining which integers can be written as
a sum of two integral squares for  certain quadratic fields by using the integral Brauer-manin obstruction (see \cite{CTX}).
The condition is computable and originally from the reciprocity law.

\bigskip

{\it MSC classification} : 11E12;11D09


\bigskip

{\it Keywords} : integral points, ring class field, reciprocity law.

\section*{Introduction} \label{sec.notation}

It is significantly more difficult to study sums of two integral
squares over algebraic number fields than that over $\Z$. From
nowadays point of view, Fermat-Gauss' theorem about sums of two
squares over $\Z$ is a purely local problem. However the question
is a global problem over algebraic number fields, even quadratic
fields since the class number of number fields is involved. There
are only a few results about the question for general algebraic
number fields $F$. Niven studied the problem for $F=\Q({\sqrt{-1}})$
in \cite{Niv}. This case is very special since the binary quadratic
form $x^2+y^2$ is hyperbolic over $\Q(\sqrt{-1})$. Nagell further
studied the problem for $F=\Q(\sqrt{d})$ when $d$ is one of the
following twenty integers:
$$\pm 2,\pm 3,\pm 5,\pm 7,\pm 11,\pm 13,\pm 19,\pm 43,\pm 67,\pm
163$$ in \cite{Nag1} and \cite{Nag2}. His method follows Gauss'
original idea  and essentially depends on the fact that the class
number of $\Q(\sqrt{d},\sqrt{-d})$ is 1 when $d$ is one of the above
integers. Obviously this method cannot be applied to general algebraic
number fields.

Recently Harari (in \cite{Ha08})  proved that the Brauer-Manin
obstruction to the existence of an integral point is the only
obstruction for an integral model of a principal homogenous space of
tori. However, the Brauer group is infinite. One cannot use these
results to determine the existence of integral points on a specific
example. Fei Xu and the author gave another proof of the result in
\cite{WX} and \cite{WX2}. In this paper we apply the method in
\cite{WX} for the sum of two squares over quadratic fields.

It should be pointed out that the method in \cite{WX} only produces
the idelic class groups of $F(\sqrt{-1})$ for solving the problem of
sum of two squares, where these idelic class groups are not unique.
In other word, the finite subgroups of the Brauer group for testing the existence of the integral points are not unique. In
order to get the explicit conditions for the sum of two squares, one
needs further to construct the explicit abelian extensions of
$F(\sqrt{-1})$ corresponding to the idelic class groups by class
field theory. Such explicit construction is a wide open problem
(Hilbert's 12-th problem) in general but ad hoc methods exist.

Notation and terminology are standard if not explained. Let $F$ be a
number field, $\frak o_F$ the ring of integers of $F$, $\Omega_F$
the set of all primes in $F$ and $\infty$ the set of all infinite
primes in $F$. For simplicity, we write $\frak p<\infty$ for $\frak
p\in \Omega_F\setminus \infty$. Let $F_\frak p$ be the completion of
$F$ at $\frak p$ and $\frak o_{F_\frak p}$ be the local completion
of $\frak o_F$ at $\frak p$ for each $\frak p\in \Omega_F$. Write
$\frak o_{F_\frak p}=F_\frak p$ for $\frak p\in \infty$. We also
denote the adele ring (resp. the idele ring) of $F$ by $\Bbb A_F$
(resp. $\Bbb I_F$).

Suppose that $-1$ is not a square in $F$. Let $E=F(\sqrt{-1})$. Let
$\bold X_\alpha$ denote the affine scheme over $\frak o_{F}$ defined
by the equation $x^2+y^2=\alpha$ for a non-zero integer $\alpha \in
\frak o_F$. The equation $x^2+y^2=\alpha$ is solvable over $\frak
o_F$ if and only if $\bold X_\alpha(\frak o_F)\neq \emptyset$. Let
$X_\alpha=\bold X_\alpha \times_{\frak o_{F}} {F}$. Obviously
$f=x+y\sqrt{-1}$ is an invertible function on $X_\alpha\times_F E$,
and $f$ induces a natural map
$$f_E:\ \ \ X_\alpha(\Bbb A_F)\rightarrow \Bbb I_E.$$ The
restriction to $X_\alpha(F_\frak p)$ of $f_E$ can be defined by
$$f_E[(x_\frak p,y_\frak p)]= \begin{cases} (x_\frak p+y_\frak p \sqrt{-1}, x_\frak p-y_\frak p\sqrt{-1})\in E_{v_1}\oplus E_{v_2} \ \ \
& \text{if $\frak p$ splits in $E/F$} \\
x_\frak p+y_\frak p\sqrt{-1}\in E_v \ \ \ & \text{otherwise,}
\end{cases} $$
where $v_1$ and $v_2$ (resp. $v$) are places of $E$ above $\frak p$.
\begin{Def}
Let $K_1,\cdots,K_n$ be finite abelian extensions over $E$. Let
$$\psi_{K_i/E}: \Bbb I_E\rightarrow \Gal(K_i/E) \text{ for } 1\leq
i\leq n$$ be the Artin map. We say that $\alpha$ satisfies the Artin
condition of $K_1,\cdots,K_n$ if there is
$$\prod_{\frak p\leq \infty}(x_\frak p,y_\frak p)\in \prod_{\frak p\leq \infty}
\bold X_\alpha(\frak o_{F_\frak p})$$such that
$$\psi_{K_i/E}(f_E[\prod_{\frak p\leq \infty}(x_\frak p,y_\frak p)])=1_{i} \text{ for } i=1,\cdots,n$$
where $1_i$ is the identity element of $\Gal(K_i/E)$.
\end{Def}

By class field theory, it is a necessary condition for $\bold
X_\alpha(\frak o_F) \neq \emptyset$ that $\alpha$ satisfies the
Artin condition of $K_1,\cdots,K_n$. There exists a (non-unique)
finite abelian extension $K/E$ independent of $\alpha$, such that
the Artin condition of $K$ is also sufficient for $\bold
X_\alpha(\frak o_F) \neq \emptyset$ (Corollary 1.6 and Theorem 1.10 in \cite{WX}).
Let $d\geq 2$ be a positive square-free integer and
$F=\Q(\sqrt{-d})$. Then the field $K$ closely depends on the (local
and global) solvability of the following equation
\begin{equation} \label{equ-1}x^2+y^2=-1.
\end{equation} Let $L=\frak o_{F}+\frak o_{F}\sqrt{-1}$ and
$H_L$ be the ring class field corresponding to the order $L$. For
example, the Artin condition of $H_L$ is sufficient if
$F=\Q(\sqrt{p}) \text{ or } \Q(\sqrt{-p})$ with $p$ prime (Theorem
0.3 in \cite{Wei}), and some related works were also performed in \cite{Wei1} and \cite{Wei2}. The following result can be found in
\cite{Wei} (Proposition 1.1).

\begin{prop} \label{ima}
Suppose one of the following conditions holds:

(1) The equation (\ref{equ-1}) has an integral solution in $\frak
o_F$.

(2) The equation (\ref{equ-1}) has no local integral solutions at a
place of $F$.

Then the equation $x^2+y^2=\alpha$ is solvable over $\frak o_F$ if
and only if $\alpha$ satisfies the Artin condition of $H_L$.
\end{prop}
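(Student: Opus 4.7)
The plan is to invoke Corollary 2.18 of \cite{WX}, which furnishes a finite abelian extension $K/E$, independent of $\alpha$, whose Artin condition is both necessary and sufficient for $\bold X_\alpha(\mathfrak{o}_F)\neq\emptyset$. I would then identify the open subgroup $U_K\subset\mathbb{I}_E$ corresponding to $K$ via global class field theory and compare it with the subgroup $U_{H_L}=E^\times\cdot\widehat{L}^\times$ attached to the ring class field $H_L$, where $\widehat{L}^\times=\prod_{\mathfrak{P}}L_\mathfrak{P}^\times$. Because any local integral point $(x_\mathfrak{p},y_\mathfrak{p})\in\bold X_\alpha(\mathfrak{o}_{F_\mathfrak{p}})$ has $x_\mathfrak{p}+y_\mathfrak{p}\sqrt{-1}\in L_\mathfrak{P}^\times$, one has $U_K\subseteq U_{H_L}$ automatically, equivalently $H_L\subseteq K$, so the Artin condition of $H_L$ is always necessary; the proposition therefore reduces to proving the reverse inclusion $U_K\supseteq U_{H_L}$ (i.e.\ $K\subseteq H_L$) under either hypothesis.

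Under hypothesis (1), a global integral solution $(x_0,y_0)\in\bold X_{-1}(\mathfrak{o}_F)$ gives $\xi_0:=x_0+y_0\sqrt{-1}\in L^\times$ with $N_{E/F}(\xi_0)=-1$. The ``extra'' generators of $U_K$ beyond $U_{H_L}$ supplied by \cite{WX} are governed by $f_E$-images of local integral points on $\bold X_{-1}$; multiplication by $\xi_0^{\pm 1}\in E^\times$, combined with a weak approximation argument inside the open subgroup $\widehat{L}^\times$, realizes each such generator as an element of $E^\times\cdot\widehat{L}^\times$, forcing $U_K=U_{H_L}$.

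Under hypothesis (2), there is a place $\mathfrak{p}_0$ with $\bold X_{-1}(\mathfrak{o}_{F_{\mathfrak{p}_0}})=\emptyset$. The $\mathfrak{p}_0$-component of any candidate extra generator of $U_K$ is then vacuous, since no local integral point on $\bold X_{-1}$ exists to produce it; a case analysis on whether $\mathfrak{p}_0$ splits, is inert, or is ramified in $E/F$ shows that the remaining components already lie in $E^\times\widehat{L}^\times$, again yielding $U_K=U_{H_L}$. The main obstacle is making the description of $U_K$ from \cite{WX} sufficiently explicit to pinpoint these extra generators and to prove their absorption into $E^\times\widehat{L}^\times$: in case (1) this is essentially a strong-approximation argument in $\widehat{L}^\times$ exploiting the norm $-1$ element $\xi_0$, while case (2) requires a careful local analysis at $\mathfrak{p}_0$ converting the non-representability of $-1$ into the collapse of $\Gal(K/H_L)$.
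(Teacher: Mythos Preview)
The paper does not actually prove this proposition; it quotes it from Proposition~1.1 of \cite{Wei}. The method used there, and the method the present paper uses throughout (see the proofs of Theorem~\ref{0.1} and Theorem~\ref{thm-2d}), goes through Corollary~2.20 of \cite{WX} (restated here as Proposition~\ref{multiple}) rather than Corollary~2.18: one takes the single field $K_1=H_L$ and checks directly that
\[
\widetilde{\lambda}_E:\ \nicefrac{T(\mathbb{A}_F)}{T(F)\prod_{\mathfrak p\le\infty}\bold T(\mathfrak o_{F_\mathfrak p})}\ \longrightarrow\ \nicefrac{\mathbb{I}_E}{E^*\prod_{\mathfrak p\le\infty}L_\mathfrak p^\times}
\]
is injective. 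For $u\in\ker\widetilde{\lambda}_E$ write $\lambda_E(u)=\beta i$ with $\beta\in E^*$ and $i\in\prod_\mathfrak p L_\mathfrak p^\times$; then $N_{E/F}(\beta)=N_{E/F}(i)^{-1}\in\mathfrak o_F^\times$. Under hypothesis~(2) the $\mathfrak p_0$-component of $i$ lies in $L_{\mathfrak p_0}^\times$ but cannot have norm $-1$, so (when $\mathfrak o_F^\times=\{\pm1\}$) one gets $N_{E/F}(\beta)=1$, hence $\beta\in T(F)$ and $i\in\prod_\mathfrak p\bold T(\mathfrak o_{F_\mathfrak p})$. Under hypothesis~(1), if $N_{E/F}(\beta)=-1$ one replaces $\beta$ by $\beta\xi_0$ and $i$ by $i\xi_0^{-1}$ with $\xi_0=x_0+y_0\sqrt{-1}\in L^\times$, reducing to the norm-one case. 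Your instincts about $\xi_0$ and about the local obstruction at $\mathfrak p_0$ are precisely the right ones; they just belong in this injectivity check, not in a comparison with an auxiliary $K$.

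Your framework as written has a real gap. The field $K$ from Corollary~2.18 is explicitly non-unique, so ``$U_K\subseteq U_{H_L}$'' is not well-posed until a specific construction is fixed, and the reason you offer---that $f_E$ sends local integral points into $\widehat L^\times$---only shows the $f_E$-image sits inside $U_{H_L}$; it says nothing about where $U_K$ sits. The discussion of ``extra generators of $U_K$ governed by $f_E$-images of local integral points on $\bold X_{-1}$'' and of the ``$\mathfrak p_0$-component being vacuous'' is not anchored in any concrete description of $U_K$: an open subgroup of $\mathbb{I}_E$ has components at every place, and none of them disappears merely because $\bold X_{-1}(\mathfrak o_{F_{\mathfrak p_0}})=\emptyset$. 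Recast everything as the injectivity of $\widetilde{\lambda}_E$ with target $H_L$ alone; then hypotheses~(1) and~(2) kill the unit $-1$ exactly as you foresaw, and no weak-approximation or case analysis on splitting behaviour is needed.
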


Therefore we only need to consider the case that the two conditions
in Proposition \ref{ima} do not hold. For this case the Artin
condition of $H_L$ is not sufficient since $-1$ satisfies the Artin
condition of $H_L$ but the equation (\ref{equ-1}) is not solvable. In general the question
will be very complicated. We cannot expect to construct a field $K$
generally such that the Artin condition of $K$ is sufficient.

The following result is well-known (Satz 2, \cite{Pe}): the equation
(\ref{equ-1}) is solvable over $\frak o_F$ if and only if the
equation
\begin{equation}\label{equ-2} x^2-dy^2=-\gamma(d) \end{equation} is solvable over $\Z$, where
$$\gamma(d)=\begin{cases} 1
  &\text{if   } d\not \equiv -1 \mod 4 \\ 2 &\text{if   } d\equiv -1 \mod 4.\end{cases}$$
In this paper, we will consider the case that the equation
(\ref{equ-1}) is solvable over $\frak o_{F_\frak p}$ for any $\frak
p\in \Omega_F$ and the equation (\ref{equ-2}) is not solvable over
$\Z_p$ for some prime $p$. It is easy to see that the two conditions
in Proposition \ref{ima} don't hold for this case. Let
$$\aligned& C=\{(d,p)|d \text{ is a square-free positive integer and }p\mid
d \text{ with }p \text{ prime}\}\\
&D_1=\{(d,p)\in C| d\not \equiv -1 \mod 8, p\equiv -1 \mod 8\}\\
&D_2=\{(d,p)\in C| d\equiv 1, 2 \mod 4, p\equiv 3 \mod 8\}\\
&D_3=\{(d,p)\in C| d\equiv 3 \mod 8, p\equiv 5 \mod 8\}\endaligned$$
Denote $$D=D_1\cup D_2\cup D_3.$$ We say that $d\in D\ (\text{or }
D_i)$ if there is a prime $p$ such that $(d,p)\in D\ (\text{or }
D_i)$. It is easy to verify that the equation (\ref{equ-1}) is
solvable over $\frak o_{F_\frak p}$ for any $\frak p\in \Omega_F$
and the equation (\ref{equ-2}) is not solvable over $\Z_p$ for some
prime $p$ if and only if $d\in D$. In this paper, we mainly prove
the following result.
\begin{thm}\label{0.1} Let $(d,p)\in D$ and $F=\Q(\sqrt{-d})$.
Then the diophantine equation $x^2+y^2=\alpha$ is solvable over
$\frak o_F$ if and only if $\alpha$ satisfies the Artin condition of
$\Theta$ and $H_L$,where $\Theta=E(\sqrt[4]{p})$.
\end{thm}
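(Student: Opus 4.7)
The plan is to handle the two implications separately. The only-if direction is immediate from class field theory: if $(x_0,y_0) \in \bold X_\alpha(\mathfrak{o}_F)$, then taking $(x_\mathfrak{p},y_\mathfrak{p}) = (x_0,y_0)$ at every place makes $f_E[\prod(x_\mathfrak{p},y_\mathfrak{p})]$ equal to the image of the principal idele $x_0+y_0\sqrt{-1} \in E^\times$ in $\Bbb I_E$, which is annihilated by the Artin map of any finite abelian extension of $E$, in particular of $\Theta$ and $H_L$.

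For sufficiency, the strategy is to invoke Corollary 2.18 of \cite{WX}, which produces a finite abelian extension $K/E$ (implicit in the construction there) whose Artin condition is sufficient for $\bold X_\alpha(\mathfrak{o}_F) \neq \emptyset$. Since asking the Artin condition for $\Theta$ and $H_L$ simultaneously (with the same adelic tuple) is equivalent to asking the Artin condition for the compositum $H_L \cdot \Theta$, the theorem reduces to the field-theoretic inclusion $K \subseteq H_L \cdot \Theta$, or equivalently to the containment of idelic norm subgroups
\[
N_{H_L \cdot \Theta/E}(\Bbb I_{H_L \cdot \Theta}) \cdot E^\times \ \subseteq \ N_{K/E}(\Bbb I_K) \cdot E^\times .
\]
By Proposition \ref{ima} and the surrounding discussion, $H_L$ alone captures every obstruction in the case where (\ref{equ-1}) is globally solvable or somewhere locally unsolvable; in our setting neither of these holds, and the remaining task is to show that $\Theta$ accounts for exactly the residual obstruction coming from the failure of (\ref{equ-2}) over $\Z_p$.

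The central computation is local at the prime $\mathfrak{p} \mid p$ of $F$. Because $E$ contains $\sqrt{-1}$, and hence the 4-th roots of unity, $\Theta = E(\sqrt[4]{p})$ is a cyclic Kummer extension of degree dividing $4$, whose Artin symbol at a prime above $p$ is encoded by a 4-th power Hilbert symbol. The congruence conditions defining $D_1,D_2,D_3$ are designed so that $p$ is not a 4-th power in the appropriate local field, and the 4-th power residue condition at $p$ is precisely what obstructs (\ref{equ-2}) over $\Z_p$. Using Hilbert biquadratic reciprocity to relate local symbols at $p$ to global data, one verifies case by case that an adelic tuple satisfying both Artin conditions automatically lies in the idelic norm group cut out by the abstract $K$, which finishes the reduction.

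The main obstacle will be the uniform treatment of the three cases $D_1,D_2,D_3$, which correspond to distinct congruence classes of $p$ and $d$ modulo $8$ and therefore to different splitting types of $p$ in $E$ and of $2$ in $F$: each case requires its own explicit local analysis at $\mathfrak{p} \mid p$, together with a verification at primes above $2$ to confirm that no additional obstruction is introduced beyond what $H_L$ already controls.
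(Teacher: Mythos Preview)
Your proposal has a genuine gap at the core of the sufficiency argument. You propose to take the abstract field $K$ produced by Corollary~2.18 of \cite{WX} and show $K \subseteq H_L \cdot \Theta$, but $K$ is not explicit and there is no reason the particular $K$ furnished by that corollary should satisfy this inclusion; the corollary only guarantees existence of \emph{some} sufficient $K$, not a minimal one. Without an explicit handle on $K$, the containment of norm groups you write down cannot be checked, and the rest of the argument (``one verifies case by case\dots'') remains a wish rather than a proof.

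The paper avoids this problem entirely by using a different input from \cite{WX}, namely Proposition~\ref{multiple} (Corollary~2.20 there), which gives a direct criterion: the Artin condition of $\Theta$ and $H_L$ is sufficient as soon as the induced map
\[
\widetilde{\lambda}_E \colon \nicefrac{T(\Bbb A_F)}{T(F)\prod_{\mathfrak p}\bold T(\mathfrak o_{F_\mathfrak p})} \longrightarrow \nicefrac{\Bbb I_E}{E^\ast N_{\Theta/E}(\Bbb I_\Theta)} \times \nicefrac{\Bbb I_E}{E^\ast \prod_{\mathfrak p} L_\mathfrak p^\times}
\]
is well-defined and injective. The substance of the proof is then a package of local $4$-th Hilbert symbol computations (Propositions~\ref{computation-1}--\ref{computation-4} and Lemmas~\ref{p}--\ref{3-2d}) showing that for $\xi \in L_\mathfrak p^\times$ with $N_{E_\mathfrak p/F_\mathfrak p}(\xi)=\pm 1$, the product $\prod_{v\mid 2p}(\xi,p)_{v,4}$ equals $\pm 1$ respectively. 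The ``$+1$'' case gives well-definedness; the ``$-1$'' case kills the only possible nontrivial kernel element, since $F^\ast \cap \prod_\mathfrak p \mathfrak o_{F_\mathfrak p}^\times = \{\pm 1\}$ for these imaginary quadratic $F$. Note also that your localization of the obstruction is wrong for $D_3$: there the symbol at $v\mid p$ is always trivial (Proposition~\ref{computation-3}) and the distinguishing place is $v\mid 2$ (Proposition~\ref{computation-4}), the reverse of what happens for $D_1\cup D_2$. Your sketch does not contain, or leave room for, this dichotomy.
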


Sums of two integral squares over real quadratic fields are more
complicated than that over imaginary quadratic fields, since there
are infinitely many units in real quadratic fields. In this paper, we also
obtain a result about real quadratic fields.
\begin{thm}\label{0.2}  Let $p$ be a prime and $F=\Q(\sqrt{2p})$. If $p\equiv 3 \mod
8$, then the diophantine equation $x^2+y^2=\alpha$ is solvable over
$\frak o_F$ if and only if $\alpha$ satisfies the Artin condition of
$\Theta$ and $H_L$, where $\Theta=E(\sqrt[4]{2})$.
\end{thm}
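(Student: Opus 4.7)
The ``only if'' direction is immediate from class field theory: any integral point $(x,y)\in \bold X_\alpha(\frak o_F)$ gives a diagonal adelic point whose image under $f_E$ lies in $E^\times$, hence in the kernel of every Artin map. For the ``if'' direction, the plan is to parallel the argument of Theorem~\ref{0.1} in the real quadratic setting $F=\Q(\sqrt{2p})$. With $d=2p$ and $p\equiv 3\bmod 8$ we have $d\equiv 2\bmod 4$, so $(2p,p)\in D_2\subseteq D$. Therefore neither condition of Proposition~\ref{ima} holds: equation (\ref{equ-1}) is locally solvable everywhere over $\frak o_F$ but not globally, so the Artin condition of $H_L$ alone cannot suffice, and an auxiliary abelian extension must be adjoined.

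The candidate $\Theta=E(\sqrt[4]{2})$ is abelian over $E$ by Kummer theory (since $\sqrt{-1}\in E$), and the first step is to verify $[\Theta:E]=4$ and to compute its ramification at the primes of $E$ above $2$ and $p$. Given $\alpha$ satisfying the Artin conditions of $\Theta$ and $H_L$, I would select local integral solutions $(x_{\frak p},y_{\frak p})\in \bold X_\alpha(\frak o_{F_\frak p})$ whose collective image under $f_E$ lies in $\ker\psi_{\Theta/E}\cap\ker\psi_{H_L/E}$. By Corollary~2.18 of \cite{WX}, there is a finite abelian extension $K/E$, independent of $\alpha$, whose Artin condition is sufficient for solvability; it therefore suffices to prove $K\subseteq \Theta\cdot H_L$. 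This reduces to an idelic computation: the intersection of the two explicit kernels inside the image of $f_E$ must be shown to lie inside $\ker\psi_{K/E}$, and that in turn reduces to local reciprocity calculations at the ramified primes of $E$ lying above $2$ and $p$.

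The main obstacle, relative to the imaginary quadratic case of Theorem~\ref{0.1}, is the presence of a fundamental unit $\varepsilon\in \frak o_F^\times$ of infinite order, which enlarges the image of the global units in the idele class group of $E$. The central step will be to show that $\varepsilon$ belongs to the norm group from $\Theta\cdot H_L$ down to $E$, via an explicit Hilbert symbol computation at the ramified primes $\frak P\mid 2p$; the hypothesis $p\equiv 3\bmod 8$ is precisely what makes this succeed. A secondary ingredient is the determination of $H_L$ and of the unit index $[\frak o_E^\times:L^\times]$ for $L=\frak o_F+\frak o_F\sqrt{-1}$, which enters the structure of $H_L$ through the conductor formula; this is standard but must be done carefully since $E$ is totally imaginary. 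Once these two ingredients are in place, the globalisation argument of \cite{WX} --- patching the local integral solutions via a principal idele of $E$ --- closes and produces a global integral point on $\bold X_\alpha$.
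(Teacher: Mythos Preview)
Your outline correctly identifies the fundamental unit $\varepsilon_F$ as the new obstacle, but the central step is stated backwards. Since $\varepsilon_F\in F^*\subset E^*$, it lies trivially in every norm class group $E^*N_{K/E}(\Bbb I_K)$; there is nothing to check there. What is actually needed is the opposite: for an idele $i=(i_v)\in\prod_{\frak p} L_\frak p^\times$ with $N_{E/F}(i)=\varepsilon_F^{n}$ and $n$ odd, one must show $\psi_{\Theta/E}(i)\neq 1$. This is Lemma~\ref{2-3}: at the unique place $v\mid 2$ of $E$, if $x_\frak p^2+y_\frak p^2=\varepsilon_F$ then $\bigl(\frac{x_\frak p+y_\frak p\sqrt{-1},\,2}{v}\bigr)_4=-1$. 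Together with the norm-$1$ computation (Lemma~\ref{3-2d}, which applies here by the Remark following it, and Lemma~\ref{3+d}), this is precisely what makes $\widetilde\lambda_E$ injective in Proposition~\ref{multiple}. The paper invokes that proposition directly rather than attempting to locate an abstract $K$ from Corollary~2.18 of \cite{WX} inside $\Theta\cdot H_L$; the latter route is not workable because $K$ is not explicit.

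You are also missing a crucial arithmetic input: by Yokoi's theorem (Corollary~2 in \cite{Yo}), the equation $x^2-2py^2=-2$ has a solution $(x_0,y_0)\in\Z^2$ when $p\equiv 3\bmod 8$, and then $\eta:=(x_0+y_0\sqrt{2p})^2/2=\varepsilon_F^{i_0}$ with $i_0$ odd (Lemma~\ref{computation2}). This explicit expression for an odd power of $\varepsilon_F$ is what makes the Hilbert-symbol evaluation in Lemma~\ref{2-3} possible; without it there is no computational handle on $\varepsilon_F$. Finally, note that $\Theta=E(\sqrt[4]{2})$ is unramified above $p$, so only the prime of $E$ above $2$ enters the symbol computations, not both $2$ and $p$ as you suggest.
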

As application, we explicitly determine which integers can be
written as a sum of two integral squares for the two quadratic
fields $\Q(\sqrt{\pm 6})$. Note that $\Q(\sqrt{\pm 6})$ are the fields
with the smallest $|d|$ that the Artin condition of $H_L$ is not
sufficient for $\Q(\sqrt{d})$.

\section{The sum of two squares in imaginary quadratic fields }

Let $F$ be an algebraic number field such that $-1$ is not a square in
$F$. Let $E=F(\sqrt{-1})$ and let $T$ be the torus $R^1_{E/F}(\Bbb
G_m)=\ker[R_{E/F}(\Bbb G_{m,E})\rightarrow \Bbb G_{m,F}]$. Here $R_{E/F}$
denotes the Weil's restriction (see \cite{Milne98}). Denote
$\lambda$ to be the embedding from $T$ to $ R_{E/F}(\Bbb G_{m,E})$.
Obviously $\lambda$ induces a natural group homomorphism
$$\lambda_E:\ \ \ T(\Bbb A_F)\rightarrow \Bbb I_E.$$  Let $\bold X_\alpha$ denote the affine scheme over
$\frak o_{F}$ defined by $x^2+y^2=\alpha$ for a non-zero integer
$\alpha \in \frak o_F$. Let $\bold T$ be the group scheme over
$\frak o_{F}$ defined by $x^2+y^2=1$ and let $T=\bold T
\times_{\frak o_{F}} {F}$. The generic fiber of $\bold X_\alpha$ is
a principal homogenous space of the torus $T$. Since $\bold T$ is
separated over $\frak o_F$, we can view $\bold T(\frak o_{F_\frak
p})$ as a subgroup of $T(F_{\frak p})$. The following result is similar to \cite[Corollary 1.6]{WX}.

\begin{prop} \label{multiple} Let $K_1/E$ and $K_2/E$ be finite abelian
extensions
 such that the group homomorphism $\widetilde{\lambda}_E$ induced by
$\lambda_E$
$$\widetilde{\lambda}_E: \nicefrac{T(\Bbb A_F)}{T(F)\prod_{\frak p\leq \infty}\bold T(\frak o_{F_\frak p})}
\longrightarrow \nicefrac{\Bbb I_{E}}{E^*N_{K_1/E}(\Bbb
I_{K_1})}\times \nicefrac{\Bbb I_{E}}{E^*N_{K_2/E}(\Bbb I_{K_2})}
$$ is well-defined and injective, where well-defined means
$$ \lambda_E(T(F)\prod_{\frak p\leq \infty}\bold T(\frak o_{F_\frak p}))\subset (E^*N_{K_1/E}(\Bbb I_{K_1}))\cap (E^*N_{K_2/E}(\Bbb I_{K_2})).$$
Then $\bold X_\alpha(\frak o_F)\neq \emptyset$ if and only if
$\alpha$ satisfies the Artin condition of $K_1$ and $K_2$.
\end{prop}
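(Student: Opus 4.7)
The plan is to exploit the principal homogeneous space structure: $\bold X_\alpha$ is a torsor under $\bold T$, and the assumption $\bold X_\alpha(\frak o_{F_\frak p}) \neq \emptyset$ (furnished by the Artin condition) trivializes this torsor locally everywhere. The critical compatibility is that, for $t \in T(\Bbb A_F)$ and $P \in X_\alpha(\Bbb A_F)$, one has $f_E(t \cdot P) = \lambda_E(t) \cdot f_E(P)$, which follows from the multiplicative identity $(c + d\sqrt{-1})(a + b\sqrt{-1}) = (ca - db) + (cb + da)\sqrt{-1}$ checked place by place, with the split-place description of $f_E$ handled by the factorization $E \otimes_F F_\frak p = F_\frak p \times F_\frak p$.

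The ``only if'' direction is immediate: if $(x,y) \in \bold X_\alpha(\frak o_F)$, its diagonal adele has $f_E$-image $x + y\sqrt{-1} \in E^* \subset E^* N_{K_i/E}(\Bbb I_{K_i})$, hence $\psi_{K_i/E}$ kills it. For the ``if'' direction, I would pick $P_0 \in \prod \bold X_\alpha(\frak o_{F_\frak p})$ witnessing the Artin condition, so $f_E(P_0) \in E^* N_{K_i/E}(\Bbb I_{K_i})$ for $i = 1, 2$. Local solvability of $\bold X_\alpha$ forces $\alpha$ to be a local norm from $E$ at every place, so the Hasse norm theorem for the quadratic extension $E/F$ supplies a rational point $Q \in X_\alpha(F)$. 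The torsor structure then produces a unique $t \in T(\Bbb A_F)$ with $t \cdot Q = P_0$, and the compatibility above gives $\lambda_E(t) = f_E(P_0) \cdot f_E(Q)^{-1}$; since $f_E(Q) \in E^*$, this lies in $E^* N_{K_i/E}(\Bbb I_{K_i})$ for both $i$.

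Applying the injectivity hypothesis on $\widetilde{\lambda}_E$, the class of $t$ in $\nicefrac{T(\Bbb A_F)}{T(F) \prod \bold T(\frak o_{F_\frak p})}$ must be trivial, so $t = t_0 u$ with $t_0 \in T(F)$ and $u \in \prod \bold T(\frak o_{F_\frak p})$. Setting $Q' := t_0 \cdot Q \in X_\alpha(F)$, one obtains $Q' = u^{-1} \cdot P_0 \in \prod \bold X_\alpha(\frak o_{F_\frak p})$, since $\bold T(\frak o_{F_\frak p})$ preserves $\bold X_\alpha(\frak o_{F_\frak p})$, and therefore $Q' \in \bold X_\alpha(\frak o_F)$. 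The main obstacle is the careful place-by-place bookkeeping required for the compatibility $f_E(t \cdot P) = \lambda_E(t) \cdot f_E(P)$ (split versus inert versus ramified) and the verification that the Hasse norm input, which a priori only needs rational local solvability, is indeed supplied by the stronger integral local solvability we have; both are routine but demand attention to the split-place component of $f_E$.
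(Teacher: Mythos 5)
Your proof is correct and is essentially the argument behind this statement: the paper itself gives no proof of Proposition \ref{multiple}, citing it as Corollary 2.20 of \cite{WX}, and the proof there runs exactly as yours does --- use the equivariance $f_E(t\cdot P)=\lambda_E(t)\cdot f_E(P)$, produce a rational point $Q\in X_\alpha(F)$ from everywhere-local solvability via the Hasse norm theorem for the cyclic extension $E/F$, note $f_E(P_0)\in E^*N_{K_i/E}(\Bbb I_{K_i})$ by Artin reciprocity, and apply the injectivity of $\widetilde{\lambda}_E$ to move the adelic translator $t$ into $T(F)\prod_{\frak p\leq \infty}\bold T(\frak o_{F_\frak p})$ and translate $Q$ to an integral point. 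The two points you deferred as routine (that $t$ is genuinely adelic, i.e.\ $t_\frak p\in \bold T(\frak o_{F_\frak p})$ for almost all $\frak p$, and that the $\bold T$-action is defined over $\frak o_F$ so $\bold T(\frak o_{F_\frak p})$ preserves $\bold X_\alpha(\frak o_{F_\frak p})$) are indeed routine and do not affect correctness.
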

Let $K$ be the composite field of $K_1$ and $K_2$,
in fact the Artin condition of $K$ is equivalent with that of $K_1$ and $K_2$. In this note, $K_1$ and $K_2$ are disjoint over $E$ and have canonical definitions, hence we can compute the Artin condition of $K$  by computing the Artin character of $K_1/E$ and $K_2/E$ respectively.

Let $d\geq 2$ be a square-free positive integer. Let
$F=\Q(\sqrt{-d})$, $\frak o_{F}$ be the ring of integers of $F$ and
$E=F(\sqrt{-1})$. In the rest of this section $F$ is always an
imaginary quadratic field. One takes the order $L= \frak o_{F}+
\frak o_{F} \sqrt{-1}$ inside $E$. Let $H_L$ be the ring class field
corresponding to the order $L=\frak o_{F}+ \frak o_{F} \sqrt{-1}$.
Recall some notations:
$$\aligned& C=\{(d,p)|d \text{ is a square-free positive integer and }p\mid
d \text{ with }p \text{ prime}\}\\
&D_1=\{(d,p)\in C| d\not \equiv -1 \mod 8, p\equiv -1 \mod 8\}\\
&D_2=\{(d,p)\in C| d\equiv 1, 2 \mod 4, p\equiv 3 \mod 8\}\\
&D_3=\{(d,p)\in C| d\equiv 3 \mod 8, p\equiv 5 \mod 8\}.
\endaligned$$ Note that $p\equiv 3 \mod 4$ if $(d,p)\in D_1\cup
D_2$.

\begin{prop} \label{computation-1} Suppose $(d,p)\in D_1\cup D_2$ and
$F=\Q(\sqrt{-d})$. Let $(x_\frak p,y_\frak p)\in \frak o_{F_\frak
p}\times \frak o_{F_\frak p}$, then the $4$-th Hilbert symbol
$$\left(\frac{x_\frak p+y_\frak p\sqrt{-1}, p}{v}\right)_{4}=\begin{cases}1 &\text{if } x_\frak p^2+y_\frak p^2=1\\
-1 &\text{if } x_\frak p^2+y_\frak p^2=-1,\end{cases}$$ where $v$
and $\frak p$ are respectively  the unique place of $E$ and $F$
above $p$.
\end{prop}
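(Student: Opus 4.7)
My plan is to evaluate the $4$th Hilbert symbol directly via its tame description, which applies here because $p$ is odd and $\mu_4 \subset E_v$.

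First, I pin down the local arithmetic at $p$. Since $p \mid d$ and $d$ is square-free, $p$ ramifies in $F/\Q$, so $\frak p$ is the unique prime of $F$ above $p$ and $v_\frak p(p) = 2$. The extension $E/F=F(\sqrt{-1})/F$ is unramified at all odd primes, and the hypothesis $(d,p)\in D_1\cup D_2$ forces $p\equiv 3\pmod 4$, so $-1$ is a nonsquare in $\mathbb{F}_p$. Consequently $\frak p$ is inert in $E/F$, there is a unique prime $v$ of $E$ above $p$, the residue field at $v$ is $\mathbb{F}_{p^2}$, and $v(p)=2$. In particular $4\mid p^2-1$, so the $4$th Hilbert symbol at $v$ lies in the tame range.

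Second, I apply the tame-symbol formula. Set $u = x_\frak p + y_\frak p\sqrt{-1}$; since $u\cdot(x_\frak p - y_\frak p\sqrt{-1}) = x_\frak p^2+y_\frak p^2 = \pm 1$, $u$ is a unit of $\mathfrak o_{E_v}$. With $v(u)=0$ and $v(p)=2$, the tame formula yields
\[
\left(\frac{u,\,p}{v}\right)_4 \;\equiv\; \bigl[(-1)^{0\cdot 2}\, u^{2}\bigr]^{(p^2-1)/4} \;=\; \tilde u^{\,(p^2-1)/2} \pmod{v},
\]
where $\tilde u$ is the image of $u$ in $\mathbb{F}_{p^2}$ and $\mu_4\subset\mathbb{F}_{p^2}^*$ is identified with $\mu_4\subset E_v^*$ via Teichm\"uller. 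The problem thus reduces to computing $\tilde u^{(p^2-1)/2}$ in $\mathbb{F}_{p^2}^*$.

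Third, I identify this residue-field exponentiation using Frobenius. The Frobenius of $\mathbb{F}_{p^2}/\mathbb{F}_p$ fixes the reductions of $x_\frak p,y_\frak p$ (which lie in $\mathbb{F}_p$) and sends $\sqrt{-1}\mapsto -\sqrt{-1}$, so $\tilde u^{\,p} = x_\frak p - y_\frak p\sqrt{-1}\bmod v$, whence
\[
\tilde u^{\,p+1} \;=\; \tilde u\cdot\tilde u^{\,p} \;=\; x_\frak p^2+y_\frak p^2 \bmod v \;=\; \pm 1 .
\]
Writing $(p^2-1)/2 = (p+1)\cdot(p-1)/2$ and using that $p\equiv 3\pmod 4$ makes $(p-1)/2$ odd, I conclude $\tilde u^{(p^2-1)/2}=1$ when $x_\frak p^2+y_\frak p^2=1$, and $\tilde u^{(p^2-1)/2}=(-1)^{(p-1)/2}=-1$ when $x_\frak p^2+y_\frak p^2=-1$, as required. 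There is no essential obstacle; the only delicate point is the double role of the congruence $p\equiv 3\pmod 4$, which both forces $\frak p$ to be inert in $E/F$ (ensuring a primitive fourth root of unity lies in the residue field at $v$) and makes $(p-1)/2$ odd, which is exactly what separates the two cases in the statement.
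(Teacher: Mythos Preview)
Your argument is correct. The local picture you set up is accurate: since $p\mid d$ with $d$ square-free, $\frak p$ is totally ramified over $p$ with residue field $\mathbb F_p$; since $p\equiv 3\pmod 4$ (which you correctly extract from the definition of $D_1\cup D_2$), the prime $\frak p$ is inert in $E/F$, so the residue field at $v$ is $\mathbb F_{p^2}$ and $v(p)=2$. The tame formula then gives $\bigl(\tfrac{u,p}{v}\bigr)_4\equiv \tilde u^{(p^2-1)/2}$, and your Frobenius computation $\tilde u^{p+1}=\widetilde{x_\frak p}^2+\widetilde{y_\frak p}^2=\pm 1$ together with the parity of $(p-1)/2$ finishes it cleanly.

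This is a genuinely different route from the paper. The paper never invokes the explicit tame formula; instead it factors $p=(-d)\cdot(-d/p)^{-1}$, kills the $-d/p$ contribution because $E(\sqrt[4]{-d/p})/E$ is unramified at $v$ and $x_\frak p+y_\frak p\sqrt{-1}$ is a unit, and then uses $-d=(\sqrt{-d})^2$ to drop from a $4$th to a quadratic symbol. Norm compatibility through the tower $E_v/F_\frak p/\Q_p$ then reduces everything to $\bigl(\tfrac{\pm1,p}{p}\bigr)=\pm1$. Your approach is more self-contained and arguably more elementary, needing only the residue-field description of the tame symbol. The paper's approach, on the other hand, is the template it reuses verbatim in the companion computations (Propositions~\ref{computation-3}, \ref{computation-4}, and Lemma~\ref{computation2}): the same ``replace $p$ by $\pm d$, peel off the unramified piece, square-root to a quadratic symbol, norm down'' manoeuvre handles all of those uniformly, including the $D_3$ case where $p\equiv 1\pmod 4$ and your parity-of-$(p-1)/2$ trick would no longer separate the two cases at the prime above $p$.
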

\begin{proof} The Hilbert symbol
$$\aligned\left(\frac{x_\frak p+y_\frak p\sqrt{-1}, p}{v}\right)_{4}
& = \left(\frac{x_\frak p+y_\frak p\sqrt{-1},-d}{v}\right)_{4}\cdot\left(\frac{x_\frak p+y_\frak p\sqrt{-1},-d/p}{v}\right)^{-1}_{4}\\
& = \left(\frac{x_\frak p+y_\frak p\sqrt{-1},-d}{v}\right)_{4}\cdot 1=\left(\frac{x_\frak p+y_\frak p \sqrt{-1},\sqrt{-d}}{v}\right)\\
& = \left(\frac{\pm 1,p}{p}\right)=\pm 1,
\endaligned$$
where the second equation holds since $E(\sqrt[4]{-d/p})/E$ is
unramified at $v$.
\end{proof}

\begin{prop} \label{computation-2} Suppose $(d,p)\in D_1\cup D_2$ and
$F=\Q(\sqrt{-d})$. If $x_\frak p$ and $y_\frak p$ in $\frak
o_{F_\frak p}$ satisfy $x_\frak p^2+y_\frak p^2=\pm 1$, then
$$\prod_{v|2} \left(\frac{x_\frak p+y_\frak p\sqrt{-1}, p}{v}\right)_{4}=1$$
where $v\in \Omega_E$ and $\frak p$ is the unique place of $F$ above
$2$.
\end{prop}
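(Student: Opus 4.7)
The plan is to perform a case analysis on the behavior of $\frak p$ in $E/F$, followed by a local $4$-th Hilbert symbol calculation in each case.

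First, since $(d,p)\in D_1\cup D_2$ forces $d\not\equiv 7\pmod 8$, the prime $2$ is non-split in $F$, so $\frak p$ is indeed unique. Inspection of square classes in $\Q_2^{*}$ shows that $\frak p$ splits further in $E/F$ (equivalently, $\sqrt{-1}\in F_\frak p$) precisely when $d\equiv 1\pmod 8$, in which case $F_\frak p=\Q_2(i)$; otherwise $\frak p$ has a unique place $v$ of $E$ above it.

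In the split case, let $v_1,v_2$ be the two places over $\frak p$, with $E_{v_j}\cong F_\frak p=\Q_2(i)$ and $\sqrt{-1}\mapsto\pm i$ under the two embeddings. Bilinearity collapses the product to $(x_\frak p^{2}+y_\frak p^{2},\,p)_{4,\,\Q_2(i)}=(\pm 1,p)_{4,\,\Q_2(i)}$. The $+1$ case is immediate; for $-1$ I would use $-1=i^{2}$ together with $(a,b)_4^{\,2}=(a,b)_2$ to reduce to the quadratic symbol $(i,p)_{2,\,\Q_2(i)}$, then use $(1+i)^{2}=2i$ (so $2\equiv -i$ modulo squares) and the explicit norm group of $\Q_2(\zeta_8)/\Q_2(i)$ to conclude via the given residue class of $p$ modulo $8$.

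In the unique-place case, the involution $\sigma\in\Gal(E/F)$ fixes $v$ and acts on $\mu_4\subset E_v$ by inversion. Galois equivariance gives $(\bar\alpha,p)_{4,v}=(\alpha,p)_{4,v}^{-1}$ for $\alpha=x_\frak p+y_\frak p\sqrt{-1}$, and combined with $(\alpha\bar\alpha,p)_{4,v}=(\pm 1,p)_{4,v}$ this already forces $(-1,p)_{4,v}=1$. To upgrade to the triviality of $(\alpha,p)_{4,v}$ itself, I would factor $\alpha$ as a fixed base element times an element of the norm-$1$ torus $T(F_\frak p)$, and verify that the character $\beta\mapsto(\beta,p)_{4,v}$ is trivial on $T(F_\frak p)$ modulo fourth powers; this amounts to an explicit computation in $E_v^{*}/E_v^{*4}$ with an adapted uniformizer, carried out separately for each of the four subcases $d\equiv 2,3,5,6\pmod 8$.

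The principal technical difficulty is the unique-place case: the wildly ramified $4$-th Hilbert symbol at $v\mid 2$ requires a fine description of $E_v^{*}/E_v^{*4}$ and of the image of $T(F_\frak p)$ therein, and the final verification depends on both $d$ and $p$ modulo $8$.
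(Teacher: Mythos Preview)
Your outline is sound in spirit and overlaps substantially with the paper's proof, but the organization is different and you defer precisely the steps that carry the weight.

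The paper splits not by the behaviour of $\frak p$ in $E/F$ but by the residue of $p$ (and then $d$) modulo $8$, and this ordering uncovers shortcuts you do not see. For $(d,p)\in D_1$ one has $-p\equiv 1\pmod 8$, so $\sqrt{p}=s\sqrt{-1}$ with $s\in\Z_2^\times$ already lies in $E_v$; hence the quartic symbol $\bigl(\alpha,p\bigr)_{4,v}$ collapses to the quadratic symbol $\bigl(\alpha,\sqrt{p}\bigr)_{2,v}$, which is then pushed down to $\bigl(N_{E_v/F_\frak p}(\alpha),\cdot\bigr)_{2,\frak p}=\bigl(\pm1,\cdot\bigr)_{2,\frak p}$ in one line, uniformly in $d$ (Lemma~\ref{p}). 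Similarly, for $d\equiv 5\pmod 8$ with $p\equiv 3\pmod 8$ one has $F_\frak p=\Q_2(\sqrt{p})$, so again $\sqrt{p}\in F_\frak p$ and the same collapse works (Lemma~\ref{3}(1)). Your split case $d\equiv 1\pmod 8$ matches Lemma~\ref{3}(2) essentially verbatim.

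Only the case $d\equiv 2\pmod 4$ (Lemma~\ref{3-2d}) resists such a reduction, and there the paper does exactly what you propose: treat the norm-$1$ part via Hilbert~90, writing $\alpha=\sigma(\beta)/\beta$ and computing $\bigl(\sigma(\beta)/\beta,l\bigr)_4$ through $N_{E_v/\Q_2}(\beta)$, then exhibit an explicit integral element of norm $-1$ in each residual subcase and evaluate the symbol on it directly (even invoking the product formula over $\Q(\sqrt{-1})$ to finish). Two remarks on your version of this step. First, you should restrict to the integral torus $\bold T(\frak o_{F_\frak p})=L_\frak p^\times\cap T(F_\frak p)$ rather than all of $T(F_\frak p)$: the paper's argument genuinely uses integrality to force $\mathrm{ord}_\frak p(N_{E_v/F_\frak p}(\beta))$ even, and without it the claim need not hold. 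Second, your Galois-equivariance identity $(\bar\alpha,p)_{4,v}=(\alpha,p)_{4,v}^{-1}$ is correct but only yields $(\alpha,p)_{4,v}\in\{\pm1\}$, so it does not shorten the remaining work; the paper does not invoke it.

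In short: the strategy is right, but the paper's case split by $p\bmod 8$ turns most of your promised ``explicit computation in $E_v^{*}/E_v^{*4}$'' into one-line quadratic-symbol identities, leaving genuine wild-symbol work only for $d\equiv 2\pmod 4$.
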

The proposition follows from the following series of lemmas.  Note that we do not assume that
$d>0$, $l$ prime and $l\mid d$ in the following lemmas and some lemmas  will be used in the real quadratic field case.
\begin{lem} \label{p}Let $d\not \equiv -1 \mod 8$ and $l\equiv -1\mod
8$. Let $F=\Q(\sqrt{-d})$. If $x_\frak p$ and $y_\frak p$ in
$F_\frak p$ satisfy $x_\frak p^2+y_\frak p^2=\pm 1$, then
$$\prod_{v|2}\left(\frac{x_\frak p+y_\frak p\sqrt{-1}, l}{v}\right)_{4}=1$$
where $v\in \Omega_E$ and $\frak p$ is the unique place of $F$ above
$2$.
\end{lem}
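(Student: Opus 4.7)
The plan is to reduce the 4-th Hilbert symbol to quadratic Hilbert symbols, and then exploit the norm identity $N_{E/F}(x_\frak p+y_\frak p\sqrt{-1})=x_\frak p^2+y_\frak p^2=\pm 1$ via a projection formula, so that the product over $v\mid 2$ collapses to a symbol in $F_\frak p$ which can be checked by hand.

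\emph{Reduction.} Since $l\equiv -1\pmod 8$, Hensel's lemma gives $m\in\Z_2^{\times}$ with $m^2=-l$. Writing $a=x_\frak p+y_\frak p\sqrt{-1}$ and using bilinearity together with the standard identity $(a,b)_{4,v}^2=(a,b^2)_{4,v}=(a,b)_{2,v}$, I would rewrite
\[
\left(\frac{a,l}{v}\right)_{4}=\left(\frac{a,-1}{v}\right)_{4}\left(\frac{a,m^2}{v}\right)_{4}=\left(\frac{a,\sqrt{-1}}{v}\right)_{2}\left(\frac{a,m}{v}\right)_{2},
\]
using $(a,-1)_{4,v}=(a,(\sqrt{-1})^2)_{4,v}=(a,\sqrt{-1})_{4,v}^{2}=(a,\sqrt{-1})_{2,v}$.

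\emph{Collapse of the $m$-factor.} The hypothesis $d\not\equiv -1\pmod 8$ ensures that $2$ is inert or ramified in $F/\Q$, so $\frak p$ is the unique place of $F$ above $2$. For any $b\in F_\frak p^{\times}$, the projection formula for the quadratic Hilbert symbol reads $(a,b)_{2,v}=(N_{E_v/F_\frak p}(a),b)_{2,\frak p}$, and summing over the (one or two) places $v\mid\frak p$ gives
\[
\prod_{v\mid \frak p}\left(\frac{a,b}{v}\right)_{2}=\left(\frac{N_{E/F}(a)_\frak p,b}{\frak p}\right)_{\!2}=\left(\frac{\pm 1,b}{\frak p}\right)_{\!2}.
\]
Applied to $b=m\in\Z_2^{\times}\subset F_\frak p^{\times}$, the $m$-part reduces to $(\pm 1,m)_{2,\frak p}$, which is trivial when $N(a)=1$ and equals $(-1,m)_{2,\frak p}$ when $N(a)=-1$.

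\emph{The $\sqrt{-1}$-factor.} This is the subtle piece, because $\sqrt{-1}$ may fail to lie in $F_\frak p$. I would split into subcases according to $d\bmod 8$. If $d\equiv 1\pmod 8$ then $F_\frak p=\Q_2(\sqrt{-d})=\Q_2(\sqrt{-1})$, so $\frak p$ splits in $E/F$ and the projection argument above applies verbatim with $b=\sqrt{-1}$, giving $(\pm 1,\sqrt{-1})_{2,\frak p}$, which one verifies directly to be $1$ (e.g.\ via the explicit relation $-1=1^2-2\cdot 1^2$, exhibiting $-1$ as a norm from $F_\frak p(\zeta_8)=\Q_2(\zeta_8)$). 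In the remaining cases $d\equiv 2,3,5,6\pmod 8$, one has $\sqrt{-1}\notin F_\frak p$, a unique $v\mid\frak p$ in $E$, and $E_v=F_\frak p(\sqrt{-1})$ is a nontrivial quadratic extension. Here I would compute $(a,\sqrt{-1})_{2,v}$ directly, by checking whether $a$ is a norm from $E_v(\zeta_8)/E_v$ using explicit representatives or a 2-adic uniformiser, and combine with the value of $(\pm 1,m)_{2,\frak p}$ established above.

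\emph{Main obstacle.} The hard step is the last one: the dyadic Hilbert symbol computation at $v\mid 2$ when $d\not\equiv 1\pmod 8$, since $E_v$ can be a ramified quartic extension of $\Q_2$ whose unit group and the norm group $N_{E_v(\zeta_8)/E_v}E_v(\zeta_8)^{\times}$ must be understood precisely. The hypothesis $l\equiv -1\pmod 8$ must be fed in to pin down $m$ modulo squares in $F_\frak p^{\times}$, and a careful case-by-case verification across the four residues $d\equiv 2,3,5,6\pmod 8$ should conclude the proof.
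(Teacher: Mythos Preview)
Your reduction coincides with the paper's: both write $l=-s^2$ with $s\in\Z_2^\times$ (your $m$), pass from the $4$-th symbol to quadratic symbols, and then apply the projection formula $\prod_{v\mid\frak p}(a,b)_{2,v}=(N_{E_\frak p/F_\frak p}(a),b)_{2,\frak p}$ for $b\in F_\frak p$. The divergence is that you split off a factor $(a,\sqrt{-1})_{2,v}$, declare it the ``main obstacle'', and propose a case analysis on $d\bmod 8$ together with a direct study of norm groups in ramified quartic extensions of $\Q_2$ --- none of which you actually carry out. As it stands the proof is incomplete precisely at the point you yourself flag.

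The paper closes this gap uniformly with a one-line observation you are missing: since $(1+\sqrt{-1})^2=2\sqrt{-1}$, the elements $\sqrt{-1}$ and $2$ lie in the same square class of $E_v^\ast$. Hence $(a,\sqrt{-1})_{2,v}=(a,2)_{2,v}$, and recombining with your $m$-factor gives
\[
(a,l)_{4,v}=(a,\sqrt{l})_{2,v}=(a,s\sqrt{-1})_{2,v}=(a,2s)_{2,v},
\]
with $2s\in\Q_2\subset F_\frak p$. Now the projection applies with no case split on $d$: $\prod_{v\mid 2}(a,2s)_{2,v}=(\pm 1,2s)_{2,\frak p}$, and a further projection to $\Q_2$ (using $[F_\frak p:\Q_2]=2$, so $N_{F_\frak p/\Q_2}(\pm 1)=1$) shows this equals $1$. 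The ``hard step'' you identify is thus an artefact of not having reduced $\sqrt{-1}$ modulo squares in $E_v$; once you do, the argument is three lines and requires no dyadic casework at all.
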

\begin{proof}The extension $E(\sqrt{l})/E$ is split over $v$, hence
$$\sqrt{l}=s\sqrt{-1}=(2\sqrt{-1})\cdot\frac{s}{2}=(1+\sqrt{-1})^2\frac{2s}{2^2},$$
where $s^2=-l$ and $s\in \Z^\times_2$. For any $x_\frak p$ and
$y_\frak p$ in $F_\frak p$ satisfy $x_\frak p^2+y_\frak p^2=\pm1$,
we have
$$\aligned \prod_{v|2}\left(\frac{x_\frak p+y_\frak p\sqrt{-1},l}{v}\right)_{4}
&=\prod_{v|2}\left(\frac{x_\frak p+y_\frak
p\sqrt{-1},\sqrt{l}}{v}\right)=\prod_{v|2}\left(\frac{x_\frak
p+y_\frak p\sqrt{-1},2s}{v}\right)\\
&=\left(\frac{\pm 1,2s}{\frak p}\right)=1.
\endaligned$$
\end{proof}

Proposition \ref{computation-2} for the case $(d,p)\in D_1$ follows
from the above lemma. The following two lemmas deal with the case
$(d,p)\in D_2$.
\begin{lem} \label{3} Let $d\equiv 1 \mod 4$ and $l\equiv 3\mod 8$. Let $F=\Q(\sqrt{-d})$.
If $x_\frak p$ and $y_\frak p$ in $\frak o_{F_\frak p}$ satisfy
$x_\frak p^2+y_\frak p^2=\pm 1$, then
$$\prod_{v|2}\left(\frac{x_\frak p+y_\frak p\sqrt{-1}, l}{v}\right)_{4}=1$$
where $v\in \Omega_E$ and $\frak p$ is the unique place of $F$ above
$2$.
\end{lem}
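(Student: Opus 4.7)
The plan is to emulate the proof of Lemma~\ref{p}: inside $E_v$, factor $\sqrt l$ (or $l$) as a product of two pieces, one of which is a square (so contributes trivially by bilinearity of the Hilbert symbol) and the other of which can be paired with $x_\frak p+y_\frak p\sqrt{-1}$ and evaluated using $x_\frak p^2+y_\frak p^2=\pm 1$ together with the projection formula $\prod_{v\mid\frak p}(\alpha,b)_{n,v}=(N_{E_v/F_\frak p}(\alpha),b)_{n,\frak p}$ for $b\in F_\frak p^\times$. The new twist is that $-l\equiv 5\pmod 8$ is not a square in $\Z_2$, so the decomposition $\sqrt l=s\sqrt{-1}$ with $s\in\Z_2^\times$ used in Lemma~\ref{p} is unavailable and must be replaced.

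The substitute I would use is: since $l\equiv 3\pmod 8$, $-5l\equiv 1\pmod 8$, so $t:=\sqrt{-5l}\in\Z_2^\times$ and formally $\sqrt l=t/\sqrt{-5}$, with $\sqrt{-5}$ now playing the role of $\sqrt{-1}$ in Lemma~\ref{p}. Because $d\equiv 1\pmod 4$, the rational prime $2$ ramifies in $F$ to a unique $\frak p$; whether $\frak p$ splits in $E/F$ (which happens iff $d\equiv 1\pmod 8$, giving $F_\frak p=\Q_2(\sqrt{-1})$) or stays prime (when $d\equiv 5\pmod 8$, with $E_v=\Q_2(\sqrt{-1},\sqrt 5)$) governs how the computation proceeds. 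In the split case, the product $\prod_{v\mid 2}(\alpha,l)_{4,v}$ collapses by the projection formula to $(\alpha\bar\alpha,l)_{4,\frak p}=(\pm 1,l)_{4,\frak p}$; writing $-1=(\sqrt{-1})^2$ in $F_\frak p$ and reducing to the residual $2$-nd Hilbert symbol $(\sqrt{-1},l)_{2,\frak p}$ in $\Q_2(\sqrt{-1})$ finishes this subcase. In the inert case, $\sqrt{-5}=\sqrt{-1}\sqrt 5\in E_v$, so $\sqrt l\in E_v$ and the $4$-th symbol reduces to a $2$-nd symbol; then the Lemma~\ref{p} trick of rewriting the factor $\sqrt{-5}$ in terms of the uniformizer $1+\sqrt{-1}$ of $\Q_2(\sqrt{-1})$ and a $\Z_2$-unit, followed by the projection formula and $\alpha\bar\alpha=\pm 1$, should finish.

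The main obstacle will be the inert subcase $d\equiv 5\pmod 8$, where the auxiliary factor $\sqrt{-5}$ lives in $E_v$ but not in the base field $F_\frak p$; one has to carefully distinguish Hilbert symbols taken over $E_v$ from those over $F_\frak p$ when applying the projection formula, and the final step, verifying that a residual $2$-adic Hilbert symbol $(\pm 1,\cdot)_{\frak p}$ equals $1$, will have to be done by an explicit $2$-adic computation.
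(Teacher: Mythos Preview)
Your split case ($d\equiv 1\pmod 8$) is exactly what the paper does: collapse the product to $(\pm 1,l)_{4,\frak p}$ over $F_\frak p=\Q_2(\sqrt{-1})$, then use $-1=(\sqrt{-1})^2$ and note that $F_\frak p(\sqrt l)/F_\frak p$ is unramified so $(\sqrt{-1},l)_{2,\frak p}=1$.

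In the inert case ($d\equiv 5\pmod 8$) your plan is more complicated than necessary and the specific step you propose will not work as written. You observe $\sqrt{-5}\in E_v$ via $\sqrt{-5}=\sqrt{-1}\,\sqrt 5$, but then you want to ``rewrite $\sqrt{-5}$ in terms of the uniformizer $1+\sqrt{-1}$ of $\Q_2(\sqrt{-1})$ and a $\Z_2$-unit.'' That trick from Lemma~\ref{p} relied on $\sqrt{-1}$ lying in $\Q_2(\sqrt{-1})$; here $\sqrt{-5}\notin\Q_2(\sqrt{-1})$ (indeed $\Q_2(\sqrt{-1},\sqrt{-5})/\Q_2(\sqrt{-1})$ is the unramified quadratic extension), so no such decomposition is available.

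The observation you are missing, and which the paper uses, is that $\sqrt l$ already lies in $F_\frak p$, not just in $E_v$. Since $d\equiv 5\pmod 8$ and $l\equiv 3\pmod 8$, we have $-d\equiv l\pmod{\Q_2^{\times 2}}$, so $F_\frak p=\Q_2(\sqrt{-d})=\Q_2(\sqrt l)$. (Equivalently, your $\sqrt{-5}$ is in $F_\frak p$, not merely in $E_v$.) Once $\sqrt l\in F_\frak p$, the projection formula gives
\[
\Bigl(\tfrac{x_\frak p+y_\frak p\sqrt{-1},\,l}{v}\Bigr)_4
=\Bigl(\tfrac{x_\frak p+y_\frak p\sqrt{-1},\,\sqrt l}{v}\Bigr)_2
=\Bigl(\tfrac{\pm 1,\,\sqrt l}{\frak p}\Bigr)_2
=\Bigl(\tfrac{\pm 1,\,-l}{2}\Bigr)_2=1,
\]
the last equality because $-l\equiv 5\pmod 8$. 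This is the whole inert-case argument; no auxiliary $t=\sqrt{-5l}$ or uniformizer gymnastics are needed.
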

\begin{proof}
(1) If $d\equiv 5\mod 8$, then $F=\Q(\sqrt{-d})=\Q(\sqrt{l})$ and
$v$ is the unique place of $E$ above $2$. For any $x_\frak p$ and
$y_\frak p$ in $F_\frak p$ satisfy $x_\frak p^2+y_\frak p^2=\pm1$,
we have
$$ \aligned \left(\frac{x_\frak p+y_\frak p\sqrt{-1},l}{v}\right)_{4}
& =\left(\frac{x_\frak p+y_\frak p\sqrt{-1},\sqrt{l}}{v}\right)\\
&=\left(\frac{\pm 1,\sqrt{l}}{\frak p}\right) =\left(\frac{\pm 1,-l}{2}\right) \\
& =1. \endaligned$$

(2) If $d\equiv 1\mod 8$, then $E/F$ is split over $\frak p$. For
any $x_\frak p$ and $y_\frak p$ in $\frak o_{F_\frak p}$ satisfy
$x_\frak p^2+y_\frak p^2=\pm1$, we have $$\left(\frac{x_\frak
p+y_\frak p\sqrt{-1},l}{v}\right)=1$$ since $E(\sqrt{l})/E$ is
unramified at any place $v$ of $E$ above $2$. So we have
$$\left(\frac{x_\frak p+y_\frak p\sqrt{-1},l}{v}\right)_{4}=\pm 1.$$ Therefore
$$\aligned \prod_{v|2}\left(\frac{x_\frak p+y_\frak p\sqrt{-1},l}{v}\right)_{4}
&=\left(\frac{(x_\frak p+y_\frak p\sqrt{-1})(x_\frak p-y_\frak p\sqrt{-1}),l}{\frak p}\right)_{4}\\
&=\begin{cases}1 &\text{ if
 } x_\frak p^2+y_\frak p^2= 1\\ \left(\frac{-1,l}{\frak p}\right)_{4}=\left(\frac{\sqrt{-1},l}{\frak p}\right)=1 & \text{ if
 } x_\frak p^2+y_\frak p^2=-1. \end{cases}\endaligned$$
\end{proof}

\begin{lem} \label{3-2d}Let $d\equiv 2 \mod 4$ and $l\equiv \pm 3\mod
8$. Let $F=\Q(\sqrt{-d})$. If $x_\frak p$ and $y_\frak p$ in $\frak
o_{F_\frak p}$ satisfy $x_\frak p^2+y_\frak p^2=\pm 1$, then
$$\left(\frac{x_\frak p+y_\frak p\sqrt{-1}, l}{v}\right)_{4}=1$$
where $v$ and $\frak p$ are respectively the unique place of $E$ and
$F$ above $2$.
\end{lem}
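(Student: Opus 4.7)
\textbf{Plan for Lemma~\ref{3-2d}.}

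Write $d=2m$ with $m$ odd. Then $F_\frak p=\Q_2(\sqrt{-2m})$ is a totally ramified quadratic extension of $\Q_2$ (with uniformizer $\sqrt{-2m}$), and since $-1$ and $-2m$ lie in different square classes of $\Q_2^\times$, we have $\sqrt{-1}\notin F_\frak p$. Thus $E_v=F_\frak p(\sqrt{-1})$ is a totally ramified degree-$4$ extension of $\Q_2$, and there is a unique place $v$ of $E$ over $2$. The crucial observation is that the nontrivial $\sigma\in\Gal(E_v/F_\frak p)$ sends $\sqrt{-1}\mapsto-\sqrt{-1}$, and so acts on $\mu_4\subset E_v$ by inversion.

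The plan is to reduce the quartic Hilbert symbol to a quadratic one and then invoke the projection formula. Set $a=x_\frak p+y_\frak p\sqrt{-1}\in\frak o_{E_v}^\times$, so that $N_{E_v/F_\frak p}(a)=x_\frak p^2+y_\frak p^2=\pm 1$. When the sign is $+1$, Hilbert~90 gives $a=b/\sigma(b)$ for some $b\in E_v^\times$. Since $l\in F_\frak p$ is fixed by $\sigma$, naturality together with the inversion action on $\mu_4$ yields
$$\left(\frac{\sigma(b),l}{v}\right)_4=\sigma\!\left(\frac{b,l}{v}\right)_4=\left(\frac{b,l}{v}\right)_4^{-1},$$
so that
$$\left(\frac{a,l}{v}\right)_4=\left(\frac{b,l}{v}\right)_4\cdot\left(\frac{\sigma(b),l}{v}\right)_4^{-1}=\left(\frac{b,l}{v}\right)_4^{2}=\left(\frac{b,l}{v}\right).$$
The projection formula for the quadratic symbol then gives $\left(\frac{b,l}{v}\right)=\left(\frac{N_{E_v/F_\frak p}(b),l}{\frak p}\right)$. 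For the sign $-1$ (which can occur only when $-1$ is a local norm, i.e.\ under the local solvability hypothesis on~(\ref{equ-1})), one first divides $a$ by a fixed $\gamma\in E_v^\times$ of norm $-1$; the factor $\gamma$ contributes a universal symbol $\left(\frac{\gamma,l}{v}\right)_4$ to be handled by the same projection argument.

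Writing $b=u+v\sqrt{-1}$ with $u,v\in F_\frak p$, we have $N(b)=u^2+v^2$, so the problem reduces to showing $\left(\frac{u^2+v^2,l}{\frak p}\right)=1$ in $F_\frak p$. The key structural input is that $u^2+v^2$ lies in $N_{E_v/F_\frak p}(E_v^\times)$, which by local class field theory equals the kernel of $(\cdot,-1)_{F_\frak p}$; hence $(u^2+v^2,-1)_{F_\frak p}=1$. Using the rational parametrization $a=(1+t\sqrt{-1})/(1-t\sqrt{-1})$ with $t\in F_\frak p$, one checks directly that $u^2+v^2$ is congruent, modulo $(F_\frak p^\times)^2$, to an element of the form $2/(1+x_\frak p)$. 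The remaining task is to verify that $(2/(1+x_\frak p),l)_{F_\frak p}=1$ when $l\equiv\pm3\bmod 8$; this follows from the fact that $l\equiv 3\bmod 8$ and $l\equiv 5\bmod 8$ are respectively $\equiv-5,5$ mod squares in $\Q_2$, together with the congruence constraint $x_\frak p^2+y_\frak p^2=\pm1$ which pins down $1+x_\frak p$ modulo higher powers of $\sqrt{-2m}$.

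The main obstacle will be this last congruence computation: the residue characteristic of $F_\frak p$ is $2$, so $|F_\frak p^\times/(F_\frak p^\times)^2|=16$ and the Hilbert symbol depends on the square class modulo higher units. I expect to need a short case analysis on $m\bmod 8$ (equivalently on $d\bmod 16$) and on $x_\frak p\bmod\sqrt{-2m}$, but the calculations are elementary once the reduction in the first two paragraphs is in place.
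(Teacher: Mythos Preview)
Your reduction via Hilbert~90, Galois equivariance of the symbol, and the projection formula is correct and is essentially how the paper begins as well (the paper writes $\sigma(\mu)/\mu$ and uses $\sigma(\mu)=N(\mu)/\mu$ rather than the equivariance identity, but the outcome $(a,l)_{v,4}=(b,l)_{v,2}=(N_{E_v/F_\frak p}(b),l)_{\frak p}$ is the same).

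Where your plan diverges, and where it is weaker, is in the endgame.

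\textbf{Norm-$1$ case.} You stop at $F_\frak p$ and propose to compute $\bigl(2/(1+x_\frak p),\,l\bigr)_{F_\frak p}$ by a case analysis on $m\bmod 8$. The paper instead pushes one step further: it chooses $\beta=a+b\sqrt{-1}$ with $a,b\in\frak o_{F_\frak p}$ and one of them a unit, and proves (using integrality of $x_\frak p,y_\frak p$ in an essential way) that $\text{ord}_\frak p(a^2+b^2)$ is \emph{even}; this lets one write $\beta=\pi_F^n\mu$ with $\mu\in\frak o_{E_v}^\times$. Then both remaining symbols are expressed through $N_{E_v/\Q_2}(\mu)$, and the key structural fact is that $N_{E_v/\Q_2}(\frak o_{E_v}^\times)\subset(\Z_2^\times)^2$ (since $E_v/\Q_2$ is totally ramified biquadratic). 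That single observation makes the computation trivial. Your planned case analysis would eventually recover this, but you have not isolated the ``even valuation'' step, which is precisely where integrality enters; without it the statement is false (there \emph{are} non-integral norm-$1$ elements $a$ with $(a,l)_{v,4}=-1$, since $l\not\equiv\pm1$ in $F_\frak p^\times/(F_\frak p^\times)^2$).

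\textbf{Norm-$(-1)$ case.} Here your plan has a genuine gap. Dividing by an arbitrary $\gamma\in E_v^\times$ of norm $-1$ produces $a/\gamma$ of norm $1$ but \emph{not} of the integral form $x+y\sqrt{-1}$ with $x,y\in\frak o_{F_\frak p}$, so the norm-$1$ result does not apply to it. The paper avoids this by observing that for any two \emph{integral} solutions $a,a'$ of $x^2+y^2=-1$ one has $a/a'=-a\,\overline{a'}\in\frak o_{F_\frak p}+\frak o_{F_\frak p}\sqrt{-1}$, so the symbol is independent of the integral solution; it then exhibits one explicit integral solution in each of the cases $d_0\equiv 1,3,7\bmod 8$ (with a preliminary reduction to $l=3$) and computes directly. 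Your approach is easily repaired by taking $\gamma$ itself to be such an explicit integral solution, but as written the ``same projection argument'' does not close the loop.
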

\begin{proof}(1) First we will prove that $\left(\frac{x_\frak p+y_\frak p\sqrt{-1},
l}{v}\right)_{4}=1$ for any $x_\frak p$ and $y_\frak p$ in $\frak
o_{F_\frak p}$ that satisfy $x_\frak p^2+y_\frak p^2=1$.

By Hilbert 90, there exists $\beta \in E_v^*$ such that $x_\frak
p+y_\frak p\sqrt{-1}=\sigma(\beta)/\beta$, where $\sigma$ is the
non-trivial element of $\Gal(E_v/F_\frak p)$. Let
$\beta=a+b\sqrt{-1}$ and  we can choose $\beta$ such that $a,b \in
\frak o_{F_\frak p}$ and $a$ or $b$ is a unit. Denote
$\kappa=a^2+b^2$.

Assume that $ord_{\frak p}(\kappa)$ is odd. The equation
$x^2+y^2=\kappa$ is not solvable over $\frak o_{F_\frak p}$ if
$ord_{\frak p}(\kappa)=1$ (by Theorem 1 in \cite{Ko}). Therefore
$ord_{\frak p}(\kappa)\geq 3$. We have $a,b \in \frak
o_{F_p}^\times$ and
$$\beta/\sigma(\beta)=(a^2-b^2-2ab\sqrt{-1})\kappa^{-1}=1-2\kappa^{-1}b^2-2\kappa^{-1}ab\sqrt{-1}.$$
Since $ord_{\frak p}(\kappa)\geq 3$ and $a,b \in \frak o_{F_\frak
p}^\times$, one has
$$2\kappa^{-1}b^2,2\kappa^{-1}ab\not \in \frak o_{F_\frak p}.$$
So $\beta/\sigma(\beta)\not\in \frak o_{F_\frak p}+\frak o_{F_\frak
p}\sqrt{-1}$. A contradiction is derived, so one obtains $ord_{\frak
p}(\kappa)$ is even.

Let $ord_{\frak p}(\kappa)=2n$.  Since $E/F$ is totally ramified and
of degree $2$, one can write $\beta=\pi_F^n\mu$, where $\pi_F$ is a
uniformizer of $F$ and $\mu\in \frak o_{E_v}^\times$. Then
$\sigma(\beta)/\beta=\sigma(\mu)/\mu$. Let $\frak q$ be the unique
place of $\Q(\sqrt{-1})$ above $2$.  We have
$$ \aligned \left(\frac{x_{\frak p}+y_{\frak p}\sqrt{-1},l}{v}\right)_{4}
=&\left(\frac{\sigma(\mu)/\mu,l}{v}\right)_{4}=\left(\frac{N_{E_v/F_\frak
p}(\mu),l}{v}\right)_{4}\cdot
\left(\frac{\mu,l}{v}\right)^{-1}\\
&=\left(\frac{N_{E_v/\Q_2}(\mu),l}{\frak q}\right)_{4}\cdot
\left(\frac{N_{E_v/\Q_2}(\mu),l}{2}\right)^{-1}.
\endaligned$$
Since $\mu\in \frak o_{E_v}^\times$ and
$E_v=\Q_2(\sqrt{-1},\sqrt{-d})$ with $d\equiv 2 \mod 4$, one has
$$N_{E_v/\Q_2}(\mu)=m^2 \text{ for some } m\in \Z_2^\times.$$ So
$$ \aligned &\left(\frac{x_{\frak p}+y_{\frak p}\sqrt{-1},l}{v}\right)_{4}
=\left(\frac{m^2,l}{\frak q}\right)_{4}\cdot
1=\left(\frac{m,l}{\frak q}\right)=1.
\endaligned$$

(2) Let $d=2d_0$. Then $d_0$ is odd.  By the argument in (1), we
only need to show that there exists a $(x_\frak p,y_\frak p)\in
\frak o_{F_\frak p}\times \frak o_{F_\frak p}$ satisfying $x_\frak
p^2+y_\frak p^2=-1$, such that
$$\left(\frac{x_\frak p+y_\frak p\sqrt{-1},l}{v}\right)_{4}=1.$$

(i) If $d_0\equiv 1\mod 4$, then $x^2-2d_0 y^2=-1$ is solvable over
$\Z_2$. Choose one solution $(x_0,y_0)\in \Z_2\times \Z_2$, then we
have $x_0^2+(y_0\sqrt{-2d_0})^2=-1$. Let
$$x_\frak p=x_0 \text{ and }y_\frak p=y_0\sqrt{-2d_0}.$$ One obtains
$$ \aligned\left(\frac{x_\frak p+y_\frak p\sqrt{-1},l}{v}\right)_{4}&
=\left(\frac{x_0-y_0\sqrt{2d_0},l}{v}\right)_{4}=\left(\frac{-1,l}{\frak
q}\right)_{4} =\left(\frac{\sqrt{-1},l}{\frak q}\right)=1.
\endaligned$$

Let $s\in \Z_2^\times$ such that $s^2=l/3\text { or }-l/3$. For any
$\delta \in E_v^*$ satisfies $N_{E_v/F_\frak p}(\delta)=\pm 1$, we
have
$$\left(\frac{\delta, l}{v}\right)_{4}=\left(\frac{\delta,3}{v}\right)_{4}\cdot \left(\frac{\delta,s}{v}\right)\cdot \left(\frac{\delta,\pm 1}{v}\right)_{4}
=\left(\frac{\delta,3}{v}\right)_{4}\cdot\left(\frac{\pm 1,s}{\frak
p}\right)\cdot 1=\left(\frac{\delta,3}{v}\right)_{4}.$$

(ii) If $d_0 \equiv 3 \mod 8$, then $F_\frak p=\Q_2(\sqrt{-6})$. Let
$$x_\frak p=(1+2\sqrt{-6})/5 \text{ and }y_\frak p=(2-\sqrt{-6})/5.$$ Then
$x_\frak p^2+y_\frak p^2=-1$. One has
$$ \aligned
\left(\frac{x_\frak p+y_\frak p\sqrt{-1},l}{v}\right)_{4}&=\left(\frac{5^{-1}(1+2\sqrt{-1})+5^{-1}(2-\sqrt{-1})\sqrt{-6},3}{v}\right)_{4}\\
&=\left(\frac{5^{-1}(3-4\sqrt{-1}),3}{\frak q}\right)_{4}\\
&=\left(\frac{3-4\sqrt{-1},3}{\frak q}\right)_{4}\cdot
\left(\frac{5,3}{\frak q}\right)_{4}^{-1}.
\endaligned$$
It is easy to see $$\left(\frac{5,3}{\frak
q}\right)_{4}=\left(\frac{-3,3}{\frak q}\right)_{4}=1.$$ By
class field theory, we have
$$ \aligned \left(\frac{x_\frak p+y_\frak p\sqrt{-1},l}{v}\right)_{4}
&=\prod_{w|3}\left(\frac{3-4\sqrt{-1},3}{w}\right)_{4}\prod_{w|5}\left(\frac{3-4\sqrt{-1},3}{w}\right)_{4}\\
&= (-\sqrt{-1})^{(3^2-1)/4}\cdot \left(\frac{5^2,3}{5}\right)_{4}\\
&=(-1)\cdot (-1)=1,
\endaligned$$
where $w$ is in the set of places of $\Q(\sqrt{-1})$.

(iii) If $d_0 \equiv 7 \mod 8$, then $F_{\frak p}=\Q_2(\sqrt{-14})$.
Let
$$x_\frak p=(3-2\sqrt{-14})/13 \text{ and }y_\frak p=(2+3\sqrt{-14})/13.$$ Then
$x_\frak p^2+y_\frak p^2=-1$. One has
$$ \aligned
\left(\frac{x_\frak p+y_\frak p\sqrt{-1},l}{v}\right)_{4}&=\left(\frac{13^{-1}(3+2\sqrt{-1})+13^{-1}(-2+3\sqrt{-1})\sqrt{-14},3}{v}\right)_{4}\\
&=\left(\frac{13^{-1}(-5-12\sqrt{-1}),3}{\frak q}\right)_{4}\\
&=\left(\frac{-5-12\sqrt{-1},3}{\frak q}\right)_{4}\cdot
\left(\frac{13,3}{\frak q}\right)_{4}^{-1}.
\endaligned$$
It's easy to see $$\left(\frac{13,3}{\frak
q}\right)_{4}=\left(\frac{-3,3}{\frak q}\right)_{4}=1.$$ By
class field theory, we have
$$ \aligned \left(\frac{x_\frak p+y_\frak p\sqrt{-1},l}{v}\right)_{4}
&=\prod_{w|3}\left(\frac{-5-12\sqrt{-1},3}{w}\right)_{4}\prod_{w|13}\left(\frac{-5-12\sqrt{-1},3}{w}\right)_{4}\\
&= 1\cdot \left(\frac{13^2,3}{13}\right)_{4}=1\cdot 1=1,
\endaligned$$
where $w$ is in the set of places of $\Q(\sqrt{-1})$.
\end{proof}

Recall
$$D_3=\{(d,p)\in C| d\equiv 3 \mod 8, p\equiv 5 \mod 8\}.$$ In the
following, we give some properties about the case $(d,p)\in D_3$.
\begin{prop} \label{computation-3}Let $(d,p)\in D_3$ and $F=\Q(\sqrt{-d})$.
If $x_\frak p$ and $y_\frak p$ in $\frak o_{F_\frak p}$ satisfy
$x_\frak p^2+y_\frak p^2=\pm 1$, then
$$\prod_{v|p}\left(\frac{x_\frak p+y_\frak p\sqrt{-1}, p}{v}\right)_{4}=1$$
where $v\in \Omega_E$ and $\frak p$ is the unique place of $F$ above
$p$.
\end{prop}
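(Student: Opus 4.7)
The key structural input is that $p\equiv 5\pmod 8$ forces $p\equiv 1\pmod 4$, so $-1$ is a square in $\Q_p$ and hence $\sqrt{-1}\in F_\frak p$. Consequently $\frak p$ splits in $E/F$ into two places $v_1, v_2$, both with $E_{v_j}=F_\frak p$, and $\sqrt{-1}$ maps to $\pm i$ under the two embeddings, where $i\in\Z_p^\times$ is a fixed square root of $-1$. This splitting is precisely what distinguishes the $D_3$ setting from $D_1\cup D_2$, where $\frak p$ was the unique place of $E$ above $p$ and Proposition~\ref{computation-2} required a delicate case-by-case analysis.

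Given this splitting, bimultiplicativity of the $4$-th Hilbert symbol lets me collapse the product at the two places of $E$ over $\frak p$ into a single symbol over $F_\frak p$:
$$\prod_{v\mid p}\left(\frac{x_\frak p+y_\frak p\sqrt{-1},p}{v}\right)_{4} = \left(\frac{x_\frak p+y_\frak p i,p}{\frak p}\right)_{4}\left(\frac{x_\frak p-y_\frak p i,p}{\frak p}\right)_{4} = \left(\frac{x_\frak p^2+y_\frak p^2,p}{\frak p}\right)_{4} = \left(\frac{\pm 1,p}{\frak p}\right)_{4}.$$
The $+1$ case is immediate; everything reduces to computing $(-1,p)_{4,F_\frak p}$.

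For that, I would write $-1 = i^2$ and use the standard identity $(a^2,b)_4 = (a,b)_2$ (squaring a $4$-th symbol produces a $2$nd symbol) to rewrite $(-1,p)_{4,F_\frak p} = (i,p)_{2,F_\frak p}$. Then, since $i\in\Q_p$, the projection (norm-compatibility) formula yields
$$(i,p)_{2,F_\frak p} = (N_{F_\frak p/\Q_p}(i),p)_{2,\Q_p} = (-1,p)_{2,\Q_p} = 1,$$
the last equality again by $p\equiv 1\pmod 4$. I do not anticipate a serious obstacle here: once the splitting of $\frak p$ in $E/F$ is noted, the remaining computation is routine bimultiplicativity plus one application of the projection formula, in sharp contrast to the three lemmas needed for $D_1\cup D_2$.
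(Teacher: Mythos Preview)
Your argument is correct. Since $p\equiv 5\pmod 8$ gives $\sqrt{-1}\in\Q_p\subset F_\frak p$, the prime $\frak p$ indeed splits in $E/F$, and your collapse of the product via bimultiplicativity to $(x_\frak p^2+y_\frak p^2,\,p)_{4,\frak p}=(\pm 1,p)_{4,\frak p}$ is valid; the identity $(-1,p)_{4,\frak p}=(i^2,p)_{4,\frak p}=(i,p)_{2,\frak p}$ together with the projection formula (legitimate because $p\in\Q_p$ and $N_{F_\frak p/\Q_p}(i)=i^2=-1$) finishes it cleanly.

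The paper proceeds differently: it reuses verbatim the decomposition from Proposition~\ref{computation-1}, writing $p=(-d)\cdot(-d/p)^{-1}$. The factor $(-d/p)$ contributes trivially because $E(\sqrt[4]{-d/p})/E$ is unramified at $v$ and $x_\frak p+y_\frak p\sqrt{-1}$ is a unit; the factor $-d$ is a square in $F$, so the $4$th symbol drops to the quadratic symbol $\prod_{v\mid p}(x_\frak p+y_\frak p\sqrt{-1},\sqrt{-d})_v$, which by norm compatibility from $E$ down to $F$ equals $(\pm 1,\sqrt{-d})_{\frak p}=(\pm 1,p)_p=1$. The virtue of the paper's route is uniformity: exactly the same chain of equalities appears in Proposition~\ref{computation-1} for $(d,p)\in D_1\cup D_2$, where $\frak p$ is \emph{inert} in $E/F$ and your splitting trick is unavailable; the two propositions differ only in the last evaluation $(-1,p)_p=\pm 1$ according to $p\bmod 4$. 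Your route, by contrast, exploits a feature specific to $D_3$ and is shorter for that case alone. (A small aside: your remark that ``Proposition~\ref{computation-2} required a delicate case-by-case analysis'' points to the wrong companion---Proposition~\ref{computation-2} concerns the place above $2$; the relevant comparison at the place above $p$ is Proposition~\ref{computation-1}, which is just as short as the present one.)
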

\begin{proof}The Hilbert symbol
$$\aligned \prod_{v|p}\left(\frac{x_\frak p+y_\frak p\sqrt{-1}, p}{v}\right)_{4}
& = \prod_{v|p}\left(\frac{x_\frak p+y_\frak p\sqrt{-1},-d}{v}\right)_{4}\cdot\prod_{v|p}\left(\frac{x_\frak p+y_\frak p\sqrt{-1},-d/p}{v}\right)^{-1}_{4}\\
& = \prod_{v|p}\left(\frac{x_\frak p+y_\frak p\sqrt{-1},-d}{v}\right)_{4}\cdot 1=\prod_{v|p}\left(\frac{x_\frak p+y_\frak p \sqrt{-1},\sqrt{-d}}{v}\right)\\
& = \left(\frac{\pm 1,p}{p}\right)=1,
\endaligned$$
where the second equation holds since $E(\sqrt[4]{-d/p})/E$ is
unramified over $v$.
\end{proof}

\begin{prop} \label{computation-4}Let $(d,p)\in D_3$ and $F=\Q(\sqrt{-d})$.
Let $(x_\frak p,y_\frak p)\in \frak o_{F_\frak p}\times
\frak o_{F_\frak p}$, then
$$\left(\frac{x_\frak p+y_\frak p\sqrt{-1}, p}{v}\right)_{4}=\begin{cases}1 &\text{if } x_\frak p^2+y_\frak p^2=1\\
-1 &\text{if } x_\frak p^2+y_\frak p^2=-1\end{cases}$$ where $v$ and
$\frak p$ are respectively the unique place of $E$ and $F$ above
$2$.
\end{prop}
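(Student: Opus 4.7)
The plan is to mimic the decomposition used in the proof of Proposition \ref{computation-1}. Writing $\alpha=x_\frak p+y_\frak p\sqrt{-1}$ and using $p=(-d)\cdot(-d/p)^{-1}$, bimultiplicativity of the $4$-th Hilbert symbol yields
$$\left(\frac{\alpha,\,p}{v}\right)_4 \;=\; \left(\frac{\alpha,\,-d}{v}\right)_4 \cdot \left(\frac{\alpha,\,-d/p}{v}\right)_4^{-1},$$
and one then analyses each factor separately, exploiting that now $v\mid 2$ rather than $v\mid p$.

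First I would kill the second factor. Writing $d=pm$ with $m$ odd and coprime to $p$, the congruences $d\equiv 3\pmod 8$ and $p\equiv 5\pmod 8$ force $m\equiv 7\pmod 8$, so $-m\equiv 1\pmod 8$; hence $-m=w^2$ for some $w\in\Z_2^\times\subset F_\frak p^\times$. By bimultiplicativity and the identity $\left(\frac{\cdot,\,\beta^2}{v}\right)_4=\left(\frac{\cdot,\,\beta}{v}\right)_2$,
$$\left(\frac{\alpha,\,-m}{v}\right)_4 \;=\; \left(\frac{\alpha,\,w^2}{v}\right)_4 \;=\; \left(\frac{\alpha,\,w}{v}\right)_2.$$
Since $w\in F_\frak p^\times$, the projection formula $\left(\frac{a,\,b}{v}\right)_2=\left(\frac{N_{E_v/F_\frak p}(a),\,b}{\frak p}\right)_2$ turns the right-hand side into $\left(\frac{\pm 1,\,w}{\frak p}\right)_2$, and a further projection for the unramified extension $F_\frak p/\Q_2$ (both arguments now in $\Q_2$, $[F_\frak p:\Q_2]=2$) reduces this to $\left(\frac{\pm 1,\,w}{2}\right)_{\Q_2}^{2}=1$.

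Next I would handle the first factor. Since $\sqrt{-d}\in F_\frak p\subset E_v$, the same bimultiplicative identity and projection formula for $E_v/F_\frak p$ give
$$\left(\frac{\alpha,\,-d}{v}\right)_4 \;=\; \left(\frac{\alpha,\,\sqrt{-d}}{v}\right)_2 \;=\; \left(\frac{\pm 1,\,\sqrt{-d}}{\frak p}\right)_2.$$
For $x_\frak p^2+y_\frak p^2=1$ this is $1$. For $x_\frak p^2+y_\frak p^2=-1$, a final projection for $F_\frak p/\Q_2$ applied with $\sqrt{-d}$ in the big field (so that $N_{F_\frak p/\Q_2}(\sqrt{-d})=d$) converts the symbol into $\left(\frac{-1,\,d}{2}\right)_{\Q_2}$; the standard unit formula $\left(\frac{u,\,v}{2}\right)_{\Q_2}=(-1)^{\epsilon(u)\epsilon(v)}$ with $\epsilon(u)=(u-1)/2\bmod 2$, together with $d\equiv 3\pmod 4$, forces this to be $-1$.

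The main technical obstacle is managing the projection formulas correctly at a residue-characteristic-$2$ place. Unlike in Proposition \ref{computation-1}, one cannot just invoke ``$E(\sqrt[4]{-d/p})/E$ is unramified at $v$'' to dispose of the second factor, because wild ramification at $v\mid 2$ may intervene; the bimultiplicativity step above is what replaces that argument, exploiting the extra congruence $-d/p\equiv 1\pmod 8$ read off from the definition of $D_3$.
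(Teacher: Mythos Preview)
Your proof is correct and rests on the same key observation as the paper's: since $(d,p)\in D_3$ forces $-d/p\equiv 1\pmod 8$, the element $-d/p$ is a square in $\Q_2$. You exploit this by writing $-d/p=w^2$ and splitting the symbol into the $-d$ and $-d/p$ parts, killing the latter via the projection $\left(\frac{\pm1,w}{\frak p}\right)=\left(\frac{\pm1,w}{2}\right)^2=1$ and reducing the former to $\left(\frac{\pm1,\sqrt{-d}}{\frak p}\right)=\left(\frac{\pm1,d}{2}\right)$. The paper instead uses the same square fact to note directly that $F_\frak p=\Q_2(\sqrt{-d})=\Q_2(\sqrt{p})$, so $\sqrt{p}\in F_\frak p$ and one can pass in a single step from the $4$-th symbol to $\left(\frac{\pm1,\sqrt{p}}{\frak p}\right)=\left(\frac{\pm1,-p}{2}\right)$; this avoids the split into two factors altogether. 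The two computations are equivalent (your $\left(\frac{-1,d}{2}\right)$ and the paper's $\left(\frac{-1,-p}{2}\right)$ agree because $d$ and $-p$ differ by the $2$-adic square $-m$), but the paper's route is a little shorter. Your closing remark about why the ``unramified'' shortcut from Proposition~\ref{computation-1} is unavailable at $v\mid 2$ is apt, and the paper sidesteps it in exactly the way you do, just packaged differently.
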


\begin{proof}The Hilbert symbol
$$\aligned\left(\frac{x_\frak p+y_\frak p\sqrt{-1}, p}{v}\right)_{4}
& =\left(\frac{x_\frak p+y_\frak p \sqrt{-1},\sqrt{p}}{v}\right)= \left(\frac{\pm 1,\sqrt{p}}{\frak p}\right)\\
& =\left(\frac{\pm 1,-p}{2}\right)= \pm 1,
\endaligned$$
where the second equation holds since $F_\frak
p=\Q_2(\sqrt{-d})=\Q_2(\sqrt{p})$.
\end{proof}

Now we can prove Theorem \ref{0.1} by using the above propositions.
\begin{proof} Let $L=\frak o_F+\frak o_F\sqrt{-1}$.
Let $\frak p$ be a place of $F$ and $L_\frak p$ be the $\frak
p$-adic completion of $L$ inside $E_\frak p = E\otimes_F F_\frak p$.
Recall $T=R^1_{E/F}(\G_{m,E})$ and $\bold T$ is the affine scheme
defined by the equation $x^2+y^2=1$, we have
$$\aligned T(F)&=\{\beta \in E^*: \ N_{E/F}(\beta)=1\}\\
\bold T(\frak o_{F_\frak p})&=\{\beta \in L_\frak p^\times: \
N_{E_\frak p/F_\frak p}(\beta)=1\}.\endaligned$$ And $L_\infty
^\times=E_\infty^*=\Bbb C^*\times \Bbb C^*$.

Let $v\in \Omega_E$ and $v\mid 2$. Let $\frak p$ be the unique place
of $F$ above $2$. By Proposition \ref{computation-2} and
\ref{computation-4}, one has $$\prod_{v\mid 2}\left(\frac{x_\frak
p+y_\frak p\sqrt{-1}, p}{v}\right)_{4}=1$$ for any $(x_\frak
p,y_\frak p)\in \frak o_{F_\frak p}\times \frak o_{F_\frak p}$ with
$x_\frak p^2+y_\frak p^2=1$. Regard $\bold T(\frak o_{F_{\frak p}})$
as a subgroup of $T(\Bbb A_F)$, this implies that
$$\lambda_E(\bold T(\frak o_{F_{\frak p}}))\subseteq E^* N_{\Theta/E}(\Bbb I_\Theta).$$

Let $v\in \Omega_E$ and $v\mid p$. Let $\frak p$ be the unique place
of $F$ above $p$. By Proposition \ref{computation-1} and
\ref{computation-3}, one has $$\prod_{v\mid p}\left(\frac{x_\frak
p+y_\frak p\sqrt{-1}, p}{v}\right)_{4}=1$$ for any $(x_\frak
p,y_\frak p)\in \frak o_{F_\frak p}\times \frak o_{F_\frak p}$ with
$x_\frak p^2+y_\frak p^2=1$. This implies that
$$\lambda_E(\bold T(\frak o_{F_{\frak p}}))\subseteq E^* N_{\Theta/E}(\Bbb I_\Theta).$$

The field extension $\Theta/E$ is unramified over each place $v$ of $E$ except
$v\mid 2p$. Therefore the natural group homomorphism
$$\widetilde{\lambda}_E: \nicefrac{T(\Bbb A_F)}{T(F)\prod_{\frak p\leq \infty}\bold T(\frak o_{F_\frak p})}
\longrightarrow [\nicefrac{\Bbb I_E}{E^*  N_{\Theta/E}(\Bbb
I_{\Theta})}] \times [\nicefrac{\Bbb I_E}{E^* \prod_{\frak p\leq
\infty} L_\frak p^\times}]$$ is well-defined. By Proposition
\ref{multiple}, we only need to show $\widetilde{\lambda}_E$ is
injective.

Let $u\in ker \widetilde{\lambda}_E$. Then there are $\beta\in E^*$
and $i\in \prod_{\frak p\leq \infty} L_\frak p^\times$ with
$\lambda_E(u)=\beta i$. We have
$$N_{E/F}(\beta)=N_{E/F}(i)^{-1} \in F^*\cap
(\prod_{\frak p\leq \infty}\frak o_{F_\frak p}^\times )=\{\pm 1\},
$$ since $F$ is an imaginary quadratic field and $F \neq
\Q(\sqrt{-1})\text { or }\Q( \sqrt {-3})$.

If $N_{E/F}(\beta)\neq 1$, one obtains
$N_{E/F}(\beta)=N_{E/F}(i)=-1$. Write $i=(i_v)_{v}\in \Bbb I_E$.
Since $\Theta/E$ is unramified over each place $v$ of $E$ except
$v\mid 2p$, one concludes that $\psi_{\Theta/E}(i_v)$ is trivial for
all primes $v\nmid 2p$, where $i_v$ is regarded as an idele whose
$v$-component is $i_v$ and 1 otherwise.

(1) Suppose $(d,p)\in D_1\cup D_2$. Since $N_{E/F}(i_v)=-1$ and
$i_v\in L_\frak p^\times$, one gets$$ \prod_{v\mid
2}\psi_{\Theta/E}(i_{v})=1 \text{ and }\psi_{\Theta/E}(i_{v'})=-1$$
by Proposition \ref{computation-1} and \ref{computation-2}, where
 $\psi_{\Theta/E}: \Bbb
I_E\rightarrow \Gal(\Theta/E)$ is the Artin map and $v'$ is the
unique place of $E$ above $p$. So
$$\psi_{\Theta/E}(\beta i)=\psi_{\Theta/E}(i)=\prod_{v\mid 2}\psi_{\Theta/E}(i_{v})\cdot \psi_{\Theta/E}(i_{v'})=-1.$$
This contradicts to $u \in ker \widetilde{\lambda}_E$.

(2) Suppose $(d,p)\in D_3$. Since $N_{E/F}(i_v)=-1$ and $i_v\in
L_\frak p^\times$, one gets $$\psi_{\Theta/E}(i_{v'})=-1 \text{ and
}\prod_{v\mid p}\psi_{\Theta/E}(i_{v})=1$$ by Proposition
\ref{computation-3} and \ref{computation-4}, where $v'$ is the
unique place of $E$ above $2$. So
$$\psi_{\Theta/E}(\beta i)=\psi_{\Theta/E}(i)=\psi_{\Theta/E}(i_{v'})\cdot \prod_{v\mid p}\psi_{\Theta/E}(i_{v})=-1.$$
This contradicts to $u \in ker \widetilde{\lambda}_E$.

Therefore $N_{E/F}(\beta)=1$, one concludes that
$$ N_{E/F}(\beta) =N_{E/F}(i) =1 \ \ \ \Rightarrow \ \ \ \beta\in
T(F) \ \ \ \text{and} \ \ \ i\in \prod_{\frak p\leq \infty}\bold
T(\frak o_{F_\frak p}).$$ So $\beta i\in T(F)\prod_{\frak p\leq
\infty}\bold T(\frak o_{F_\frak p})$. Then $\widetilde{\lambda}_E$
is injective.
\end{proof}

\begin{lem} \label{2} Let $F=\Q(\sqrt{-2d})$ and $d\equiv 3 \mod 4$.
If $x_\frak p$ and $y_\frak p$ in $\frak o_{F_\frak p}$ satisfy
$x_\frak p^2+y_\frak p^2=-1$, then the $4$-th Hilbert symbol
$$\left(\frac{x_\frak p+y_\frak p\sqrt{-1}, 2}{v}\right)_{4}=-1$$
where $v$ and $\frak p$ are respectively  the unique place of $E$
and $F$ above $2$.
\end{lem}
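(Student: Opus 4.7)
The plan is to reduce to evaluating the symbol on a single explicit solution, and then compute it by global reciprocity in $\Q(\sqrt{-1})$. If $(x_0,y_0),(x_1,y_1)$ are two solutions of $x^2+y^2=-1$ in $\frak o_{F_\frak p}$ and $\eta_i=x_i+y_i\sqrt{-1}$, then
$$\eta_1/\eta_0=-\eta_1\overline{\eta_0}=a+b\sqrt{-1}$$
with $a,b\in\frak o_{F_\frak p}$ and $a^2+b^2=(x_0^2+y_0^2)(x_1^2+y_1^2)=1$. So it suffices to prove (a) that $\left(\frac{a+b\sqrt{-1},2}{v}\right)_{4}=1$ whenever $a^2+b^2=1$, and (b) that $\left(\frac{\eta,2}{v}\right)_{4}=-1$ for at least one specific solution.

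For (a), I would rerun the proof of part (1) of Lemma \ref{3-2d} verbatim, replacing $l\equiv\pm 3\mod 8$ by $l=2$. By Hilbert 90, $a+b\sqrt{-1}=\sigma(\beta)/\beta$ with $\beta=\pi_F^n\mu$, $\mu\in\frak o_{E_v}^\times$, and the same manipulation gives
$$\left(\frac{a+b\sqrt{-1},2}{v}\right)_{4}=\left(\frac{N_{E_v/\Q_2}(\mu),2}{\frak q}\right)_{4}\left(\frac{N_{E_v/\Q_2}(\mu),2}{2}\right)^{-1}.$$
Since $N_{E_v/\Q_2}(\mu)=m^2$ for some $m\in\Z_2^\times$ (exactly as in Lemma \ref{3-2d}(1), as $E_v\supset\Q_2(\sqrt{-1},\sqrt{-2d})$), the right side collapses to $\left(\frac{m,2}{\frak q}\right)_{2}$, which equals $\left(\frac{m^2,2}{2}\right)=1$ by corestriction from $\Q_2(\sqrt{-1})$ down to $\Q_2$.

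For (b), I would take the explicit solutions constructed in parts (2)(ii) and (2)(iii) of Lemma \ref{3-2d}: $(x_\frak p,y_\frak p)=((1+2\sqrt{-6})/5,(2-\sqrt{-6})/5)$ when $d\equiv 3\mod 8$ and $(x_\frak p,y_\frak p)=((3-2\sqrt{-14})/13,(2+3\sqrt{-14})/13)$ when $d\equiv 7\mod 8$. Applying corestriction from $E_v$ to $\Q_2(\sqrt{-1})$ and using the identities $(2-\sqrt{-1})^2=3-4\sqrt{-1}$ and $(3+2\sqrt{-1})^2=5+12\sqrt{-1}$, one computes
$$N_{E_v/\Q_2(\sqrt{-1})}(\eta)=\frac{2-\sqrt{-1}}{2+\sqrt{-1}}\quad\text{and}\quad-\frac{3+2\sqrt{-1}}{3-2\sqrt{-1}}$$
in the two cases respectively. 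Both sides are global elements of $\Q(\sqrt{-1})$, so the Hilbert symbol at $\frak q$ can be determined by reciprocity in $\Q(\sqrt{-1})$: contributions away from $\frak q$ vanish except at the split primes above $5$ (resp.\ $13$), where the tame symbol formula (with $(q-1)/4=1$ in $\mathbb{F}_5$, or $3$ in $\mathbb{F}_{13}$) yields a product of two fourth roots of unity equal to $-1$. Combined with $\left(\frac{-1,2}{\frak q}\right)_{4}=1$ in the second case, this forces $\left(\frac{\eta,2}{v}\right)_{4}=-1$.

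The main obstacle is (b): one must fix the embedding of $\sqrt{-1}$ into $\mathbb{F}_5^\times$ under reduction modulo $2\pm\sqrt{-1}$ (resp.\ into $\mathbb{F}_{13}^\times$ modulo $3\pm 2\sqrt{-1}$) consistently with the tame symbol convention, in order to lift the two residue contributions to specific powers of $\sqrt{-1}\in\mu_4\subset\Q(\sqrt{-1})$ whose product equals $-1$ rather than $+1$.
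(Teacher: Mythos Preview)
Your route is workable but far longer than the paper's. The paper dispatches the lemma in three lines via the factorisation $2=(-2d)\cdot(-d)^{-1}$: the factor $\left(\frac{x_\frak p+y_\frak p\sqrt{-1},-d}{v}\right)_4$ equals $1$ by Lemma~\ref{3-2d} (applied with the ``$d$'' there equal to $2d$ and $l=-d$; when $d\equiv 7\bmod 8$ one has $-d\equiv 1\bmod 8$ a $2$-adic square and the factor is again easily seen to be $1$), and since $\sqrt{-2d}\in F$,
\[
\left(\frac{x_\frak p+y_\frak p\sqrt{-1},-2d}{v}\right)_4
=\left(\frac{x_\frak p+y_\frak p\sqrt{-1},\sqrt{-2d}}{v}\right)
=\left(\frac{-1,\sqrt{-2d}}{\frak p}\right)
=\left(\frac{-1,2d}{2}\right)=-1.
\]
No explicit solutions or global reciprocity are needed; the whole point is that the $-2d$ piece descends to a quadratic symbol over $\Q_2$ because $\sqrt{-2d}$ already lives in $F$.

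As for the obstacle you flag in (b), it dissolves once you use the Galois symmetry. Complex conjugation $c$ on $\Q(\sqrt{-1})$ swaps the two primes $w_1,w_2$ above $5$ (resp.\ $13$), sends your element $\gamma=N_{E_v/\Q(\sqrt{-1})_{\frak q}}(\eta)$ to $\gamma^{-1}$ (since $\gamma\bar\gamma=1$), and fixes $2$. Equivariance of the Hilbert symbol, $\left(\frac{c(a),c(b)}{c(w)}\right)_4=c\!\left(\frac{a,b}{w}\right)_4$, then yields $\left(\frac{\gamma,2}{w_2}\right)_4=\overline{\left(\frac{\gamma,2}{w_1}\right)_4}^{\,-1}=\left(\frac{\gamma,2}{w_1}\right)_4$, so the two contributions are \emph{equal} and their product is the square of either one. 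Since $2$ has order $4$ in $\mathbb F_5^\times$ and in $\mathbb F_{13}^\times/(\mathbb F_{13}^\times)^4$, the tame symbol at each $w_i$ is a \emph{primitive} fourth root of unity, hence its square is $-1$ regardless of which primitive root it is. So your argument can be completed without ever pinning down the embedding; but the paper's factorisation trick makes the entire computation in (b) unnecessary.
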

\begin{proof} The Hilbert symbol
$$\aligned\left(\frac{x_\frak p+y_\frak p\sqrt{-1}, 2}{v}\right)_{4}
& = \left(\frac{x_\frak p+y_\frak p\sqrt{-1},-2d}{v}\right)_{4}\cdot\left(\frac{x_\frak p+y_\frak p\sqrt{-1},-d}{v}\right)^{-1}_{4}\\
& = \left(\frac{x_\frak p+y_\frak p\sqrt{-1},-2d}{v}\right)_{4}\cdot 1=\left(\frac{x_\frak p+y_\frak p \sqrt{-1},\sqrt{-2d}}{v}\right)\\
& = \left(\frac{-1,2d}{2}\right)=-1
\endaligned$$
where the second equation holds by Lemma \ref{3-2d}.
\end{proof}

Using a similar argument as in the proof of Theorem \ref{0.1}, the following
result follows from Lemma \ref{3-2d} and \ref{2}.
\begin{prop}\label{-2p} Let $F=\Q(\sqrt{-2d})$ and $d\equiv 3 \mod 4$.
Then the diophantine equation $x^2+y^2=\alpha$ is solvable over
$\frak o_F$ if and only if $\alpha$ satisfies the Artin condition of
$\Theta$ and $H_L$, where $\Theta=E(\sqrt[4]{2})$ and $H_L$ is the
ring class field corresponding to the order $L=\frak o_{F}+ \frak
o_{F} \sqrt{-1}$.
\end{prop}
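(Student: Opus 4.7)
The plan is to apply Proposition~\ref{multiple} with $K_1 = \Theta = E(\sqrt[4]{2})$ and $K_2 = H_L$, and to verify that the induced map $\widetilde{\lambda}_E$ is both well-defined and injective, exactly mirroring the proof of Theorem~\ref{0.1} with Lemma~\ref{3-2d} and~\ref{2} replacing Propositions~\ref{computation-1}--\ref{computation-4}. The simplification here, compared with Theorem~\ref{0.1}, is that $\Theta = E(\sqrt[4]{2})$ is ramified only at places above $2$, so the Hilbert symbol computations localize at a single prime of $E$ and no case split is needed.

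For well-definedness, the factor through $\Bbb I_E/E^*\prod L_\frak p^\times$ is automatic from the definition of $L = \frak o_F + \frak o_F\sqrt{-1}$. For the factor through $\Bbb I_E/E^*N_{\Theta/E}(\Bbb I_\Theta)$, since $\Theta/E$ is unramified outside the places above $2$, it suffices to show that $\left(\frac{x_\frak p+y_\frak p\sqrt{-1},2}{v}\right)_4 = 1$ at the unique place $v$ of $E$ above $2$ for every $(x_\frak p, y_\frak p)\in \frak o_{F_\frak p}\times \frak o_{F_\frak p}$ with $x_\frak p^2 + y_\frak p^2 = 1$. The same chain of identities used in Lemma~\ref{2}, namely $\left(\frac{x+y\sqrt{-1},2}{v}\right)_4 = \left(\frac{x+y\sqrt{-1},-2d}{v}\right)_4 \cdot \left(\frac{x+y\sqrt{-1},-d}{v}\right)_4^{-1}$, reduces this to $\left(\frac{1,2d}{2}\right) = 1$ once Lemma~\ref{3-2d} is invoked to kill the second factor.

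For injectivity, take $u \in \ker\widetilde{\lambda}_E$ and write $\lambda_E(u) = \beta i$ with $\beta \in E^*$ and $i \in \prod_\frak p L_\frak p^\times$. Since $F$ is imaginary quadratic with $F \neq \Q(\sqrt{-1}), \Q(\sqrt{-3})$, we have $N_{E/F}(\beta) = N_{E/F}(i)^{-1} \in F^* \cap \prod_\frak p \frak o_{F_\frak p}^\times = \{\pm 1\}$. If $N_{E/F}(\beta) = -1$, then at the unique $\frak p \mid 2$ we have $N_{E_v/F_\frak p}(i_v) = -1$ with $i_v \in L_\frak p^\times$, and Lemma~\ref{2} gives $\psi_{\Theta/E}(i_v) = -1$. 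Because $\Theta/E$ is unramified at every other place, $\psi_{\Theta/E}(\beta i) = \psi_{\Theta/E}(i) = \psi_{\Theta/E}(i_v) = -1$, contradicting $u \in \ker\widetilde{\lambda}_E$. Hence $N_{E/F}(\beta) = 1$, forcing $\beta \in T(F)$ and $i \in \prod_\frak p \bold T(\frak o_{F_\frak p})$, so $u$ vanishes in the quotient.

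The main subtlety is to confirm that there is indeed a unique place $v$ of $E$ above $2$, so that the local Artin symbol is a single clean $-1$ rather than a product that could cancel. This follows because $-2d \equiv 2 \bmod 8$ forces $2$ to ramify in $F/\Q$, and $-1$ is not a square in $F_\frak p$, so $E/F$ remains a proper quadratic extension locally at $\frak p$. With this in hand, the argument proceeds exactly as outlined above.
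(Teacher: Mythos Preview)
Your proof is correct and follows exactly the route the paper indicates: apply Proposition~\ref{multiple} with $K_1=\Theta$ and $K_2=H_L$, use Lemma~\ref{3-2d} for the well-definedness of $\widetilde{\lambda}_E$ and Lemma~\ref{2} for injectivity, precisely mirroring the proof of Theorem~\ref{0.1}. Your remark that the argument simplifies because $\Theta=E(\sqrt[4]{2})$ is ramified only above~$2$ (so no case split over a second prime is needed) is a helpful clarification that the paper leaves implicit.
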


Now we use Proposition \ref{-2p}  to give an explicit example.
\begin{exa} Let
$F=\Q(\sqrt{-6})$. We write
$N_{F/\Q}(\alpha)=2^{s_1}3^{s_2}p_1^{e_1}\cdots p_g^{e_g}$ for any
$\alpha=a+b\sqrt{-6}\text{ and } a,b \in \Z$. Let $P(\alpha)=\{p_1,
\cdots, p_g \}$. Denote
$$\aligned& P_1=\{p\in P(\alpha): \left(\frac{-1}{p}\right)=\left(\frac{-6}{p}\right)=1 \text{ and }
\left(\frac{2}{p}\right)=-1 \}\cr & P_2= \{p\in P(\alpha):
\left(\frac{-1}{p}\right)=-\left(\frac{-6}{p}\right)=1 \text{ and } \left(\frac{2}{p}\right)=-1\}\\
& P_3= \{p\in P(\alpha):
\left(\frac{-1}{p}\right)=\left(\frac{-6}{p}\right)=1\text{ and
}\left(\frac{2}{p}\right)_4=-1\}.
\endaligned $$
It is easy to see that $e_i$ is even for $p_i\in P_2$.

Then $x^2+y^2=\alpha$ is solvable over $\frak
o_F$ if and only if
\begin{enumerate}[(1)]
\item The equation $x^2+y^2=\alpha$ has integral solutions at every
place of $F$.

\item $P_1\neq \emptyset$, or $2\mid a$, or $$\sum_{p_i\in
P_2}e_i/2+\sum_{p_i\in P_3}e_i\equiv \begin{cases} 0 \mod 2 \ \ \ &
\text{if }a \equiv 1, 3 \mod 8 \cr
 1\mod 2 \ \ \ & \text{if }a\equiv -1, -3 \mod 8 \end{cases}$$
for $P_1=\emptyset$ and $2\nmid a$.
\end{enumerate}
\end{exa}

\section{The sum of two squares in real quadratic fields}

Let $d> 1$ be a square-free odd number and $F=\Q(\sqrt{2d})$. Let
$\frak o_{F}$ be the ring of integers  of $F$, $\varepsilon_F$ the
fundamental unit of $\frak o_{F}$ and
$\varepsilon_F=a+b\sqrt{2d}\text{ with }a,b>0$. Let
$E=F(\sqrt{-1})$. One takes the order $L= \frak o_{F}+ \frak o_{F}
\sqrt{-1}$ inside $E$. In this section we always assume that one of
the equations $x^2-2dy^2=\pm 2$ is solvable over $\Z$ and  we fix
one solution $(x_0,y_0)$. Denote $\omega=x_0+y_0\sqrt{2d}$ and
$\eta=\omega^2/2$. Then $\eta\in \frak o_F^\times$ and
$\eta=\varepsilon_F^{i_0}$ for some $i_0\in \Z$. By the assumption,
we have $N_{F/\Q}(\varepsilon_F)=1$ (see \cite{Per}, pp. 106-109).

\begin{lem} \label{computation2} Let $p\equiv \pm 3 \mod 8 $ and $p|
d$. If one of the equations $x^2-2dy^2=\pm 2$ is solvable over $\Z$
with the notation as above, then $i_0$ is odd and  the $4$-th
Hilbert symbol
$$\left(\frac{x_\frak p+y_\frak p\sqrt{-1}, p}{v}\right)_{4}=-1$$
for $x_\frak p$ and $y_\frak p$ in $\frak o_{F_\frak p}$ satisfy
$x_\frak p^2+y_\frak p^2=\varepsilon_F$, where $v$ and $\frak p$ are
respectively  the unique prime in $E$ and $F$ above $p$.
\end{lem}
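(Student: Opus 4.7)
The plan is to handle the two assertions separately. For the parity of $i_0$, I would use the defining relation $\omega^2=2\varepsilon_F^{i_0}$: if $i_0$ were even then $2$ would be a square in $F$, but $F=\Q(\sqrt{2d})\ne \Q(\sqrt{2})$ since $d>1$ is square-free, a contradiction. Writing $i_0=2m+1$ then yields $2\varepsilon_F=(\omega\,\varepsilon_F^{-m})^2$, so $\varepsilon_F\equiv 2\pmod{(F^\times)^2}$; this congruence will drive the Hilbert-symbol computation.

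For the symbol, I would first note that $p\equiv\pm 3\pmod 8$ forces $p\equiv 3\pmod 4$, so $-1$ is a non-square in $\frak o_{F_\frak p}/\frak p=\mathbb{F}_p$, and hence $E_v/F_\frak p$ is unramified of degree $2$ with residue field $\mathbb{F}_{p^2}$. In particular $\pi_F:=\sqrt{2d}$ is a uniformizer of both $F_\frak p$ and $E_v$. Writing $d=pd'$, I would imitate the reduction used in Propositions \ref{computation-1} and \ref{computation-2} to peel off $(\beta,-2d/p)_{4,v}$: since $-2d/p=-2d'$ is a unit at $v$ and $p$ is odd, the extension $E_v(\sqrt[4]{-2d'})/E_v$ is unramified, and $\beta:=x_\frak p+y_\frak p\sqrt{-1}$ is a unit (its norm is $\varepsilon_F$), so this factor is trivial; combined with $p\cdot(-2d/p)=-2d$ this gives
\[
\left(\frac{\beta,p}{v}\right)_{4}=\left(\frac{\beta,-2d}{v}\right)_{4}=\left(\frac{\beta,\sqrt{-2d}}{v}\right)_{2}
\]
because $\sqrt{-2d}\in E$. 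Next I would split $\sqrt{-2d}=\sqrt{-1}\cdot\sqrt{2d}$ and observe that $\zeta_8\in\mathbb{F}_{p^2}\subset E_v$ (since $8\mid p^2-1$ for any odd $p$), so $E_v(\sqrt[4]{-1})=E_v$ and the symbol $(\beta,\sqrt{-1})_{2,v}$ is trivial. This reduces the whole problem to evaluating the tame symbol $(\beta,\pi_F)_{2,v}$.

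That last symbol is computed by the standard residue formula: for a unit $\beta\in\frak o_{E_v}^\times$ and the uniformizer $\pi_F$,
\[
\left(\frac{\beta,\pi_F}{v}\right)_{2}=\bar\beta^{(p^2-1)/2}\in\mathbb{F}_p^\times.
\]
To evaluate the right-hand side I would reduce the norm relation $\beta\,\sigma(\beta)=\varepsilon_F$ modulo $v$: the non-trivial element $\sigma\in\Gal(E_v/F_\frak p)$ restricts to Frobenius on $\mathbb{F}_{p^2}/\mathbb{F}_p$, so $\bar\beta^{\,p+1}=\overline{\varepsilon_F}$ in $\mathbb{F}_{p^2}^\times$, and therefore
\[
\bar\beta^{(p^2-1)/2}=\overline{\varepsilon_F}^{\,(p-1)/2}=\left(\frac{\overline{\varepsilon_F}}{p}\right).
\]
Since the first paragraph gives $\varepsilon_F\equiv 2\pmod{(F^\times)^2}$, this Legendre symbol equals $\left(\frac{2}{p}\right)$, which is exactly $-1$ for $p\equiv\pm3\pmod 8$.

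The main obstacle is the chain of reductions in the middle paragraph: one has to verify carefully that each intermediate extension really is unramified at $v$ (so that the unit $\beta$ pairs trivially with it) and to keep track of where $\sqrt{-1}$, $\sqrt{2d}$ and $\sqrt 2$ live locally. Once the chain brings everything down to a single tame $2$-nd Hilbert symbol at the uniformizer $\pi_F=\sqrt{2d}$, the parity of $i_0$ together with a routine residue-field calculation delivers the value $-1$.
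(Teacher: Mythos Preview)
Your proof is correct and follows essentially the same route as the paper: both factor $p$ against $2d$ (the paper uses $2d$, you use $-2d$, which is harmless since $\zeta_8\in E_v$), kill the unramified piece $(\beta,2d/p)_{4,v}$, drop from the $4$th to the $2$nd symbol via $\sqrt{2d}\in E$, and finish with the Legendre symbol $\bigl(\tfrac{2}{p}\bigr)=-1$. The only genuine difference is logical order: the paper computes $\bigl(\tfrac{\eta,\sqrt{2d}}{\frak p}\bigr)=\bigl(\tfrac{2,p}{p}\bigr)=-1$ directly from $\eta=\omega^2/2$ and then reads off both $\bigl(\tfrac{\varepsilon_F,\sqrt{2d}}{\frak p}\bigr)=-1$ and ``$i_0$ odd'' from $\eta=\varepsilon_F^{i_0}$, whereas you first establish $i_0$ odd by the elementary observation that $i_0$ even would force $2\in (F^\times)^2$, and then use $\varepsilon_F\equiv 2\pmod{(F^\times)^2}$ to evaluate the symbol. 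Your parity argument is a pleasant shortcut; the Hilbert-symbol chain is otherwise the same computation carried out in $E_v$ (via the tame formula and Frobenius) rather than descended to $F_\frak p$ by the norm, and both land on $\bigl(\tfrac{2}{p}\bigr)$.
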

\begin{proof} The Hilbert symbol
$$\aligned\left(\frac{x_\frak p+y_\frak p\sqrt{-1}, p}{v}\right)_{4}
& = \left(\frac{x_\frak p+y_\frak p\sqrt{-1},2d}{v}\right)_{4}\cdot\left(\frac{x_\frak p+y_\frak p\sqrt{-1},2d/p}{v}\right)^{-1}_{4}\\
& = \left(\frac{x_\frak p+y_\frak p\sqrt{-1},2d}{v}\right)_{4}\cdot 1=\left(\frac{x_\frak p+y_\frak p \sqrt{-1},\sqrt{2d}}{v}\right)\\
& = \left(\frac{\varepsilon_F ,\sqrt{2d}}{\frak p}\right)
\endaligned$$
where the second equation holds since $E(\sqrt[4]{2d/p})/E$ is
unramified over $v$. However,$$\left(\frac{\eta ,\sqrt{2d}}{\frak
p}\right)=\left(\frac{\omega^2/2 ,\sqrt{2d}}{\frak
p}\right)=\left(\frac{2,p}{p}\right)=-1.$$ Then
$$-1=\left(\frac{\eta ,\sqrt{2d}}{\frak p}\right)=\left(\frac{\varepsilon_F,\sqrt{2d}}{\frak p}\right)^{i_0}.$$
Therefore $\left(\frac{\varepsilon_F,\sqrt{2d}}{\frak p}\right)=-1$
and $i_0$ is odd.
\end{proof}

\begin{lem} \label{3+d} Let $p|d$, $p\equiv \pm 3\mod
8$. Suppose one of the equations $x^2-2dy^2=\pm 2$ is solvable over $\Z$
with the notation as above. If $x_\frak p$ and $y_\frak p$ in $\frak
o_{F_\frak p}$ satisfy $x_\frak p^2+y_\frak p^2=\varepsilon_F$, then
$$\left(\frac{x_\frak p+y_\frak p\sqrt{-1}, p}{v}\right)_{4}=1$$
where $v$ and $\frak p$ are respectively the unique place of $E$ and
$F$ above $2$.
\end{lem}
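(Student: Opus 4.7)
The plan is to reduce to Lemma \ref{3-2d}(1) via a ``reference solution'' trick, then verify the reduced symbol for one explicit choice of solution.

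By hypothesis, one local solution of $x^2+y^2=\varepsilon_F$ exists over $\frak o_{F_\frak p}$; fix $\mu_0^{(0)} := x_\frak p^{(0)}+y_\frak p^{(0)}\sqrt{-1}$ for such a solution. For any other solution $\mu_0 := x_\frak p+y_\frak p\sqrt{-1}$, the quotient
$$\frac{\mu_0}{\mu_0^{(0)}} = \frac{x_\frak p x_\frak p^{(0)}+y_\frak p y_\frak p^{(0)}}{\varepsilon_F} + \frac{y_\frak p x_\frak p^{(0)}-x_\frak p y_\frak p^{(0)}}{\varepsilon_F}\sqrt{-1}$$
lies in $\frak o_{F_\frak p}+\frak o_{F_\frak p}\sqrt{-1}$, since $\varepsilon_F$ is a unit, and its coordinates $A,B$ satisfy $A^2+B^2=1$. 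Since $F_\frak p=\Q_2(\sqrt{2d})$ is, as a local field, of the same shape as the $\Q_2(\sqrt{-d})$ appearing in Lemma \ref{3-2d}(1) (both being $\Q_2(\sqrt{2\cdot\text{odd}})$ up to sign, so that $E_v$ is the same totally ramified biquadratic extension of $\Q_2$), the entire argument of Lemma \ref{3-2d}(1) applies verbatim to $\mu_0/\mu_0^{(0)}$ with $l=p$, yielding $\left(\frac{\mu_0/\mu_0^{(0)},p}{v}\right)_4 = 1$. Hence $\left(\frac{\mu_0,p}{v}\right)_4 = \left(\frac{\mu_0^{(0)},p}{v}\right)_4$ is independent of the choice of solution.

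It remains to exhibit one reference solution for which the symbol is $1$. Using $\tilde\omega^2 = 2\varepsilon_F$ with $\tilde\omega := \omega\,\varepsilon_F^{-(i_0-1)/2}\in F$ (well-defined since $i_0$ is odd by Lemma \ref{computation2}), the equation $x^2+y^2=\varepsilon_F$ rewrites as $(X+Y)^2+(X-Y)^2=\tilde\omega^2$, and an explicit local solution is produced case-by-case in $d\mod 8$, mirroring Lemma \ref{3-2d}(2)(ii)--(iii). For each such $\mu_0^{(0)}$, the Hilbert symbol $\left(\frac{\mu_0^{(0)},p}{v}\right)_4$ is computed directly via standard Hilbert symbol manipulations and class field theory, analogously to the computations at the end of Lemma \ref{3-2d}(2), using the congruence $p\equiv\pm 3\mod 8$.

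The principal obstacle is the case analysis in constructing the reference solution and performing the explicit Hilbert symbol computation: for each of $d\mod 8 \in\{1,3,5,7\}$, the local field $F_\frak p$ takes one of the forms $\Q_2(\sqrt{\pm 2})$ or $\Q_2(\sqrt{\pm 10})$, and the $2$-adic normal form of $\varepsilon_F$ differs accordingly. Each sub-case requires its own choice of $(x_\frak p^{(0)},y_\frak p^{(0)})$ and a separate verification analogous to the ad hoc computations in Lemma \ref{3-2d}(2)(ii)--(iii).
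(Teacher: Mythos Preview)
Your reduction step is correct and is exactly how the paper proceeds as well: the quotient of any two integral solutions has norm $1$, and Lemma~\ref{3-2d}(1) (which applies here by the Remark following it, since $\Q(\sqrt{2d})=\Q(\sqrt{-d'})$ with $d'=-2d\equiv 2\bmod 4$) kills the symbol of that quotient. The paper invokes Lemma~\ref{3-2d} in precisely this way at the very end of its proof.

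The substantive gap is in the second half. You propose to build a reference solution of $x^2+y^2=\varepsilon_F$ via $\tilde\omega^2=2\varepsilon_F$ and then compute case-by-case in $d\bmod 8$, but you do not carry this out, and there is a real obstacle: $\varepsilon_F$ (equivalently, the unit $u$ in $\tilde\omega=u\sqrt{2d}$) is \emph{not} determined $2$-adically by $d\bmod 8$, so the analogy with Lemma~\ref{3-2d}(2)(ii)--(iii) (where explicit rational numbers were available) does not go through directly. Your rewriting $(X+Y)^2+(X-Y)^2=\tilde\omega^2$ also does not automatically produce integral $(x,y)$, since one needs $X+Y,X-Y\in 2\frak o_{F_\frak p}$.

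The paper sidesteps this by never constructing a solution of $x^2+y^2=\varepsilon_F$ at all. Instead it builds an explicit $\epsilon\in L_\frak p$ with $N_{E_v/F_\frak p}(\epsilon)=\eta=\varepsilon_F^{i_0}$ (not $\varepsilon_F$), namely
\[
\epsilon=\tfrac{1}{2}\,\omega\,(1+\sqrt{-1})\,\frac{a+b\sqrt{-1}}{a-b\sqrt{-1}},
\]
where $(a,b)\in\frak o_{F_\frak p}^2$ solves $a^2+b^2=\pi_F^3$ for a uniformizer $\pi_F$ with $\bigl(\tfrac{\pi_F,-1}{\frak p}\bigr)=1$ (such a solution exists by K\"orner's criterion, and one checks $a\not\equiv\pm b\bmod 2$ to verify $\epsilon\in L_\frak p$). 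Because $\omega$ is explicit and the construction is uniform in $d$, the symbol $\bigl(\tfrac{\epsilon,p}{v}\bigr)_4$ can be computed in one pass: reduce $p$ to $3$ (since $\pm p/3\in(\Z_2^\times)^2$), expand the product, and evaluate. Finally, since $i_0$ is odd (Lemma~\ref{computation2}), the $i_0$-th power relation forced by Lemma~\ref{3-2d} transfers the result from $\eta$ back to $\varepsilon_F$. Working with $\eta$ rather than $\varepsilon_F$ is the key idea your sketch is missing.
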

\begin{proof}Since $E_v=F_\frak p(\sqrt{-1})$ is ramified over $F_\frak p$,
there is a uniformizer $\pi_F$ in $F_\frak p$ such that
$\left(\frac{\pi_F,-1}{\frak p}\right)=1$. We know $x^2+y^2=\pi_F^3$
is solvable over $\frak o_{F_\frak p}$ and $x^2+y^2=\pi_F$ is not
solvable over $\frak o_{F_\frak p}$ (by Theorem 1 in \cite{Ko}).
Choose $(a,b)$ be one solution of the equation $x^2+y^2=\pi_F^3$,
then we have $a\text{ and }b\in \frak o_{F_\frak p}^\times $,
otherwise we obtains $x^2+y^2=\pi_F$ is solvable over $\frak
o_{F_\frak p}$.

Recall $\omega=x_0+y_0\sqrt{2d}$ and $\eta=\omega^2/2$, here
$x_0,y_0\in \Z$ satisfy $x_0^2-2dy_0^2=\pm 2$. Denote
$$\epsilon=2^{-1}\omega
\cdot(1+\sqrt{-1})\cdot(a+b\sqrt{-1})/(a-b\sqrt{-1}).$$ We can see
$N_{E_v/F_\frak p}(\epsilon)=\eta$ and
$$\aligned\epsilon & =2^{-1}\pi_F^{-3}\omega\cdot(1+\sqrt{-1})\cdot(a^2-b^2+2ab\sqrt{-1})\\
& = 2^{-1}\pi_F^{-3}\omega\cdot[(a^2-b^2-2ab)+(a^2-b^2+2ab)\sqrt{-1}] \\
& =(\omega/2-
b\omega(a+b)/\pi_F^3)+(\omega/2-b\omega(b-a)/\pi_F^3)\sqrt{-1}.
\endaligned$$

In the following we will prove $a\not \equiv \pm b \mod 2$. If not
then $a=\pm b+2u$ for some $u\in \frak o_{F_\frak p}$. We have $(\pm
b+2u)^2+b^2=\pi_F^3.$ Note that $b\in \frak o_{F_\frak p}^\times$,
then
$$2=ord_{\frak p}(4u^2+2b^2\pm 4ub)=ord_{\frak p}(\pi_F^3)=3,$$
a contradiction is derived. So $a\not \equiv b \mod 2$. Since $a,b$
are units and the residue field of $\frak o_{F_\frak p}$ is $\Bbb
F_2$, we have
$$a+b=\pi_F u_1,a-b=\pi_F u_2$$ with $u_1,u_2\in \frak o_{F_\frak
p}^\times$. Then
$$\aligned\epsilon &=(\omega/2-bu_1\omega/\pi_F^2)+\sqrt{-1}(\omega/2+bu_2\omega/\pi_F^2)\\
&=2^{-1}\omega(1-bu_1\frac {2}{\pi_F^2})+2^{-1}\omega(1+bu_2\frac
{2}{\pi_F^2})\sqrt{-1}.\endaligned$$ We have $\epsilon \in \frak
o_{F_\frak p}+\frak o_{F_\frak p}\sqrt{-1}$ since $b,u_1,u_2\in
\frak o_{F_\frak p}^\times$ and the residue field of $\frak
o_{F_\frak p}$ is $\Bbb F_2$.

Let $\frak q$ be the unique prime of $\Q(\sqrt{-1})$ above $2$. Let
$s\in \Z_2^\times$ such that $s^2=p/3$ or $-p/3$. For any $\delta
\in E_v^*$ satisfies $N_{E_v/F_\frak p}(\delta)=\eta $, we have
$$\aligned\left(\frac{\delta, p}{v}\right)_{4}&=\left(\frac{\delta,3}{v}\right)_{4}\cdot \left(\frac{\delta,s}{v}\right)\cdot \left(\frac{\delta,\pm 1}{v}\right)_{4}
=\left(\frac{\delta,3}{v}\right)_{4}\cdot\left(\frac{\eta,s}{\frak
p}\right)\cdot1\\
&=\left(\frac{\delta,3}{v}\right)_{4}\cdot\left(\frac{1,s}{
p}\right)=\left(\frac{\delta,3}{v}\right)_{4}.
\endaligned$$
Then
$$\aligned\left(\frac{\epsilon, p}{v}\right)_{4}& = \left(\frac{2^{-1}\pi_F^{-3}\omega\cdot(1+\sqrt{-1})\cdot(a+b\sqrt{-1})^2,3}{v}\right)_{4}\\
&=\left(\frac{2^{-1},3}{v}\right)_{4}\cdot\left(\frac{\pi_F^3,3}{v}\right)^{-1}_{4}\cdot \left(\frac{\omega,3}{v}\right)_{4}\cdot\left(\frac{1+\sqrt{-1},3}{v}\right)_{4}\cdot\left(\frac{a+b\sqrt{-1},3}{v}\right)\\
&=\left(\frac{2^{-1},3}{\frak q}\right)\cdot
\left(\frac{\pi_F^3,3}{v}\right)^{-1}_{4}\cdot \left(\frac{\pm
2,3}{\frak q}\right)_4\cdot \left(\frac{1+\sqrt{-1},3}{\frak
q}\right)\cdot
\left(\frac{\pi_F^{3},3}{\frak p}\right)\\
&=1\cdot \left(\frac{\pi_F^3,3}{v}\right)^{-1}_{4}\cdot 1\cdot(-1) \cdot\left(\frac{\pi_F,3}{\frak p}\right)\\
\endaligned$$
Since $\left(\frac{\pi_F,-1}{\frak p}\right)=1$ by the choice of
$\pi_F$ and $F_\frak p(\sqrt{-3})/F_\frak p$ is unramifed of degree
$2$, then one obtains $$\left(\frac{\pi_F,3}{\frak
p}\right)=\left(\frac{\pi_F,-3}{\frak p}\right)=-1.$$ Therefore
$$\aligned\left(\frac{\epsilon, p}{v}\right)_{4}&=\left(\frac{\pi_F^3,3}{v}\right)^{-1}_{4}
= \left(\frac{N_{E_v/\Q_2}(a+b\sqrt{-1}),3}{\frak
q}\right)^{-1}_{4}.\endaligned$$ We can write
$N_{E_v/\Q_2}(a+b\sqrt{-1})=2^3m$ with $m\equiv 1 \text{ or } -3\mod
8$. Therefore
$$\aligned\left(\frac{\epsilon, p}{v}\right)_{4}&=\left(\frac{2^3m,3}{\frak
q}\right)_{4}^{-1}=\left(\frac{m,3}{\frak q}\right)_{4}^{-1}\\
=&\begin{cases}1 &\text{ if } m\equiv 1\mod
8\\
\left(\frac{-3,3}{\frak q}\right)_4^{-1}=1 &\text{ if } m\equiv
-3\mod 8.
\end{cases}\endaligned$$
Let $x_\frak p$ and $y_\frak p$ in $\frak o_{F_\frak p}$ satisfy
$x_\frak p^2+y_\frak p^2=\varepsilon_F$. Since
$\eta=\varepsilon_F^{i_0}$, we have
$$\left(\frac{\epsilon,p}{v}\right)_{4}=\left(\frac{x_\frak p+y_\frak p\sqrt{-1},
p}{v}\right)_{4}^{i_0}$$ by Lemma \ref{3-2d}. Since $i_0$ is odd by
Lemma \ref{computation2}, one obtains $$\left(\frac{x_\frak
p+y_\frak p\sqrt{-1}, p}{v}\right)_{4}=1.$$
\end{proof}

\begin{cor} \label{er} Let $F=\Q(\sqrt{2d})$ and $p|d$, $p\equiv \pm 3\mod 8$.
If one of the equations $x^2-2dy^2=\pm 2$ is solvable over $\Z$,
then $x^2+y^2=\varepsilon_F$ is not solvable over $\frak o_F$.
\end{cor}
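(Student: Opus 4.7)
The strategy is proof by contradiction via the product formula for the $4$-th Hilbert symbol. Suppose $(x,y)\in\frak o_F\times\frak o_F$ satisfies $x^2+y^2=\varepsilon_F$, and set $\alpha=x+y\sqrt{-1}\in E^{*}$. Since $N_{E/F}(\alpha)=\varepsilon_F$ is a unit of $\frak o_F$ and $\alpha\in\frak o_E$, one gets $\alpha\in\frak o_E^{\times}$; in particular $\alpha$ is a unit at every finite place of $E$.

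I would then apply the global reciprocity relation
$$\prod_{v\in\Omega_E}\left(\frac{\alpha,p}{v}\right)_4=1,$$
which is available since $\sqrt{-1}\in E$, and evaluate each local factor separately. At the unique place $v\mid p$ of $E$, Lemma~\ref{computation2} applied to the local images of $(x,y)$ yields the factor $-1$. At the unique place $v\mid 2$ of $E$, Lemma~\ref{3+d} yields the factor $+1$. At every remaining finite place $v\nmid 2p$ the residue characteristic is coprime to $4$, so the $4$-th Hilbert symbol is tame, and because both $\alpha$ and $p$ are local units there the factor collapses to $1$. Infinite places contribute trivially, since $E$ is CM and every archimedean completion is $\Bbb C$.

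Multiplying the factors, the left-hand side of the product formula becomes $-1$, a contradiction; hence no such $(x,y)$ exists. The main conceptual point is the passage from a putative global solution to the global unit $\alpha\in\frak o_E^{\times}$: it is exactly this globality that forces $\alpha$ to be a unit at every finite place and thereby annihilates the ``generic'' Hilbert symbol contributions outside $\{v\mid 2p\}$ through tameness. Since the delicate local computations at the places above $2$ and above $p$ have already been carried out in Lemmas~\ref{computation2} and~\ref{3+d}, and their two outputs differ in sign, no further obstacle is expected.
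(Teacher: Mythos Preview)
Your proof is correct and essentially the same as the paper's: both evaluate the local $4$-th Hilbert symbols $(\,\cdot\,,p)_4$ at the places above $p$ and $2$ via Lemmas~\ref{computation2} and~\ref{3+d}, note that the remaining local contributions vanish (units at unramified places), and conclude via reciprocity. The paper packages this as the failure of the Artin condition for $\Theta=E(\sqrt[4]{p})$, whereas you invoke the product formula for the Hilbert symbol directly; under the Kummer-theoretic identification of the Artin map for $\Theta/E$ with the symbol $(\,\cdot\,,p)_4$, these are the same computation.
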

\begin{proof} Recall $\bold X_{\varepsilon_F}$ is the affine scheme over $\frak o_F$ defined by
$x^2+y^2=\varepsilon_F$. Let $\Theta=E(\sqrt[4]{p})$. One obtains $$
\psi_{\Theta/E}(f_E[\prod_{\frak p\leq \infty}(x_\frak p,y_\frak
p)])=\left(\frac{x_{\frak p'}+y_{\frak p'}\sqrt{-1},
p}{v}\right)_{4}=-1
$$ for any $\prod_{\frak p\leq \infty}(x_\frak p,y_\frak p)\in \prod_{\frak p\leq \infty}
\bold X_{\varepsilon_F}(\frak o_{F_\frak p})$ by Lemma
\ref{computation2} and \ref{3+d}, where $\psi_{\Theta/E}$ is the
Artin map, $v$ and $\frak p'$ are respectively the unique prime of
$E$ and $F$ above $p$ . The result follows from the class field
theory.
\end{proof}
\begin{rem*} In fact $x^2+y^2=\varepsilon_F$ has local integral solutions at every
place of $F$. The solvability is obvious if the place is not above
$2$. Let $\frak p$ be the unique place of $F$ above 2. Then we only
need to show that it is solvable over $\frak o_{F_\frak p}$. Since
$\eta=\omega^2/2=\varepsilon_F^{i_0}$ with $i_0$ odd, we have
$$x^2+y^2=\varepsilon_F \text{ is solvable over } \frak o_{F_\frak p} \Leftrightarrow x^2+y^2=\eta \text{ is solvable over } \frak o_{F_\frak p}.$$
Note that $\omega=x_0+y_0\sqrt{2d}$ with $2\mid x_0$ and $y_0$ odd.
It is easy to verify that $x^2+y^2=\eta$ is solvable over $\frak
o_{F_\frak p}$ by Theorem 1 in \cite{Ko}.
\end{rem*}

\begin{thm}\label{thm-2d} Let $F=\Q(\sqrt{2d})$. If $p|d$, $p\equiv \pm 3\mod
8$ and one of the equations $x^2-2dy^2=\pm 2$ is solvable over $\Z$,
then the diophantine equation $x^2+y^2=\alpha$ is solvable over
$\frak o_F$ if and only if $\alpha$ satisfies the Artin condition of
$\Theta$ and $H_L$, where $\Theta=E(\sqrt[4]{p})$ and $H_L$ is the
ring class field corresponding to the order $L=\frak o_{F}+ \frak
o_{F} \sqrt{-1}$.
\end{thm}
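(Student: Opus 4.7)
The plan is to apply Proposition \ref{multiple} with $K_1 = \Theta = E(\sqrt[4]{p})$ and $K_2 = H_L$, following the strategy of the proof of Theorem \ref{0.1}. The entire statement then reduces to checking that the induced map $\widetilde\lambda_E$ from $\nicefrac{T(\Bbb A_F)}{T(F)\prod_{\frak p\leq\infty}\bold T(\frak o_{F_\frak p})}$ to $\nicefrac{\Bbb I_E}{E^* N_{\Theta/E}(\Bbb I_\Theta)} \times \nicefrac{\Bbb I_E}{E^* N_{H_L/E}(\Bbb I_{H_L})}$ is both well-defined and injective.

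For well-definedness, the $H_L$-factor is immediate from the definition of the ring class field. For the $\Theta$-factor I need $\prod_{v\mid 2p}(x_\frak p+y_\frak p\sqrt{-1},p)_4 = 1$ for every $(x_\frak p,y_\frak p)\in \frak o_{F_\frak p}\times\frak o_{F_\frak p}$ with $x_\frak p^2+y_\frak p^2 = 1$. At places $v\mid p$ I would imitate Lemma \ref{computation2} with $\varepsilon_F$ replaced by $1$: the bilinear identity $(\cdot,p)_4 = (\cdot,2d)_4(\cdot,2d/p)_4^{-1}$, together with unramifiedness of $E(\sqrt[4]{2d/p})/E$ at $v$, collapses the $4$-th symbol to the quadratic $(\cdot,\sqrt{2d})$, which vanishes on norm-$1$ elements. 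At $v\mid 2$, the construction used in the proof of Lemma \ref{3+d}---producing via Hilbert $90$ a local $\epsilon\in L_\frak p^\times$ of the prescribed norm and then expanding it explicitly---adapts verbatim from norm $\eta$ to norm $1$.

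For injectivity, suppose $u\in\ker\widetilde\lambda_E$ and write $\lambda_E(u) = \beta i$ with $\beta\in E^*$ and $i\in\prod_{\frak p\leq\infty}L_\frak p^\times$. Then $N_{E/F}(\beta) = N_{E/F}(i)^{-1}\in F^*\cap\prod_{\frak p<\infty}\frak o_{F_\frak p}^\times = \frak o_F^\times = \langle-1,\varepsilon_F\rangle$. Crucially, at each real archimedean place $\frak p$ of $F$ one has $E_\frak p = \Bbb C$ and $N_{E_\frak p/F_\frak p}(i_\frak p) = |i_\frak p|^2 > 0$; combined with the hypothesis $N_{F/\Q}(\varepsilon_F) = 1$ (which forces $\varepsilon_F > 0$ at both embeddings of $F$), this implies $N_{E/F}(\beta)\in\{\varepsilon_F^n : n\in\Z\}$. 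Replacing $\beta$ by $\beta\varepsilon_F^k$ and $i$ by $i\varepsilon_F^{-k}$ for a suitable $k\in\Z$ (valid since $\varepsilon_F\in\frak o_F^\times\subset L_\frak p^\times$ for every $\frak p$), one may normalize $N_{E/F}(\beta)\in\{1,\varepsilon_F\}$. The case $N_{E/F}(\beta) = 1$ yields $\beta\in T(F)$ and $i\in\prod\bold T(\frak o_{F_\frak p})$, so $u$ is trivial in the quotient.

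The hard part is ruling out the case $N_{E/F}(\beta) = \varepsilon_F$. Here one needs $\psi_{\Theta/E}(\beta i) = \psi_{\Theta/E}(i) = \prod_{v\mid 2p}(i_v,p)_4$ to be trivial, whereas a computation in the style of Corollary \ref{er}---combining Lemma \ref{computation2} (giving $-1$ at the place $v\mid p$) with Lemma \ref{3+d} (giving $1$ at $v\mid 2$)---should deliver the value $-1$ and so contradict $u\in\ker\widetilde\lambda_E$. The obstacle is that those lemmas are written for local components with $N(i_\frak p) = \varepsilon_F$ exactly, whereas in the global setting each local norm is merely some unit whose product over $\frak p$ equals $\varepsilon_F^{-1}$. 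I would bridge this gap through the corestriction identity $(i_v,\sqrt{2d})_v = (N_{E_v/F_\frak p}(i_v),\sqrt{2d})_\frak p$ for the underlying quadratic symbol, which shows that $(i_v,p)_4$ depends on $i_v$ only through its local norm modulo squares, so the normalised computation already performed is enough.
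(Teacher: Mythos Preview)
Your plan is the paper's own proof: apply Proposition~\ref{multiple} with $K_1=\Theta$ and $K_2=H_L$, check well-definedness at the ramified places $v\mid 2p$, and for injectivity rule out $N_{E/F}(\beta)=\varepsilon_F^{n}$ with $n$ odd by showing $\psi_{\Theta/E}(i)=-1$.

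Two corrections. For well-definedness at $v\mid 2$, the paper does not adapt Lemma~\ref{3+d}; it cites Lemma~\ref{3-2d}, using the Remark after that lemma (which drops the hypotheses $d>0$, $l$ prime, $l\mid d$) so that it applies verbatim to $F=\Q(\sqrt{2d})$. The Hilbert~90 argument you describe is in fact the proof of Lemma~\ref{3-2d}, not of Lemma~\ref{3+d}, so your instinct is right but the attribution is off.

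More importantly, the ``obstacle'' you raise in the injectivity step is a misunderstanding. From $\lambda_E(u)=\beta i$ in $\Bbb I_E$ with $u\in T(\Bbb A_F)$ one has $N_{E/F}(\beta)\,N_{E/F}(i)=1$ as an equality of ideles; since $N_{E/F}(\beta)$ is the \emph{principal} idele $\varepsilon_F^{n}$, the idele $N_{E/F}(i)$ is the principal idele $\varepsilon_F^{-n}$, which means $N_{E_\frak p/F_\frak p}(i_\frak p)=\varepsilon_F^{-n}$ at \emph{every} place $\frak p$ individually---not ``some unit whose product over $\frak p$ equals $\varepsilon_F^{-1}$''. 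Hence Lemmas~\ref{computation2} and~\ref{3+d}, together with the norm-$1$ case already established and bilinearity of the Hilbert symbol, apply directly to each $i_\frak p$: at $v\mid p$ one gets $(-1)^{-n}=-1$ and at $v\mid 2$ one gets $1$, so $\psi_{\Theta/E}(i)=-1$ and the contradiction follows. Your corestriction workaround is correct at $v\mid p$ but is unnecessary once this point is understood.
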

\begin{proof}
Let $\frak p$ be a place of $F$ and $L_\frak p$ the $\frak p$-adic
completion of $L$ inside $E_\frak p = E\otimes_F F_\frak p$.

Let $v_1$  and $\frak p_1$  be respectively the unique prime of $E$
and $F$ above $2$. By Lemma \ref{3-2d}, one has $\left(\frac{\xi,
p}{v}\right)_{4}=1$ for any $\xi\in L_{\frak p_1}^\times $ with
$N_{E_{v_1}/F_{\frak p_1}}(\xi)=1$. This implies that
$$\lambda_E(\bold T(\frak o_{F_{\frak p_1}}))\subseteq N_{\Theta_{\frak V_1}/E_{v_1}}(\Theta_{\frak
V_1}^*) $$ where $\frak V_1$ is the unique place of $\Theta$ above
$v_1$.

Let $v_2$  and $\frak p_2$  be respectively the unique prime of $E$
and $F$ above $p$. By the similar computation in Lemma
\ref{computation2}, one has $\left(\frac{\xi, p}{v}\right)_{4}=1$
for any $\xi\in L_{\frak p_2}^\times $ with $N_{E_{v_2}/F_{\frak
p_2}}(\xi)=1$. This implies that
$$\lambda_E(\bold T(\frak o_{F_{\frak p_2}}))\subseteq N_{\Theta_{v_2}/E_{v_2}}(\Theta_{v_2}^*) $$ where $\Theta_{v_2}=\Theta\otimes_E E_{v_2}$.

Since $\Theta/E$ is unramified over all primes except $v_1$ and
$v_2$,  therefore the natural group homomorphism
$$\widetilde{\lambda}_E: \nicefrac{T(\Bbb A_F)}{T(F)\prod_{\frak p\leq \infty}\bold T(\frak o_{F_\frak p})}
\longrightarrow [\nicefrac{\Bbb I_E}{E^* N_{\Theta/E}(\Bbb
I_{\Theta})}] \times [\nicefrac{\Bbb I_E}{E^* \prod_{\frak p\leq
\infty} L_\frak p^\times}]$$ is well-defined. By Proposition
\ref{multiple}, we only need to show $\widetilde{\lambda}_E$ is
injective.

Let $u\in ker \widetilde{\lambda}_E$. Then there are $\beta\in E^*$
and $i\in \prod_{\frak p\leq \infty} L_\frak p^\times$ with
$\widetilde{\lambda}_E(u)=\beta i$. Therefore
$$N_{E/F}(\beta)=N_{E/F}(i)^{-1} \in F^*\cap
(\prod_{\frak p\leq \infty}\frak o_{F_\frak p}^\times )=\{\pm
\varepsilon_F^n|n\in \Z\}.$$ Since $N_{E/F}(\beta)$ is totally
positive, we have $N_{E/F}(\beta)=\varepsilon_F^n$ and
$N_{E/F}(i)=\varepsilon_F^{-n}$.

Assume that $n$ is odd. Write $i=(i_v)_{v}\in \Bbb I_E$. Since
$\Theta/E$ is unramified over all primes of $E$ except $v_1$ and
$v_2$, one has $\psi_{\Theta/E}(i_v)$ is trivial for all primes
$v\neq v_1,v_2$, where $i_v$ is regarded as an idele whose
$v$-component is $i_v$ and 1 otherwise. Since $N_{E_{v}/F_\frak
p}(i_{v})=\varepsilon_F^{-n},$ one gets
$$\psi_{\Theta/E}(\beta i)=\psi_{\Theta/E}(i)=\psi_{\Theta/E}(i_{v_1})\psi_{\Theta/E}(i_{v_2})=1\cdot(-1)^{-n}=-1$$
by Lemma \ref{computation2} and \ref{3+d}, where $\psi_{\Theta/E}$
is the Artin map. This contradicts to $u \in ker
\widetilde{\lambda}_E$.

Therefore $n$ is even. Let $$\gamma=\beta
\varepsilon_F^{n/2},j=i\varepsilon_F^{-n/2}.$$ Then
$$ N_{E/F}(\gamma) =N_{E/F}(j) =1 \ \ \ \Rightarrow \ \ \ \gamma\in
T(F) \ \ \ \text{and} \ \ \ j\in \prod_{\frak p\leq \infty}\bold
T(\frak o_{F_\frak p}).$$ So $\beta i=\gamma j\in T(F)\prod_{\frak
p\leq \infty}\bold T(\frak o_{F_\frak p})$. Then
$\widetilde{\lambda}_E$ is injective.
\end{proof}

In the following we consider a special case. Let $d=p$ be a prime
with $p\equiv 3 \mod 8$. Then the equation $x^2-2py^2=-2$ is
solvable over $\Z$ (Corollary 2 in \cite{Yo}).
\begin{lem} \label{2-3} Let $F=\Q(\sqrt{2p})$ and $p\equiv 3 \mod 8$.
If $x_\frak p$ and $y_\frak p$ in $\frak o_{F_\frak p}$ satisfy
$x_\frak p^2+y_\frak p^2=\varepsilon_F$, then the $4$-th Hilbert
symbol
$$\left(\frac{x_\frak p+y_\frak p\sqrt{-1}, 2}{v}\right)_{4}=-1$$
where $v$ and $\frak p$ are respectively  the unique prime in $E$
and $F$ above $2$.
\end{lem}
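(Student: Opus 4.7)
The plan is to reduce to previously established Hilbert symbol computations by exploiting the factorization $2 = 2p/p$ in $E^*$, together with the extra relation $2p = (\sqrt{2p})^2$ coming from $\sqrt{2p}\in F\subset E$. Writing $\alpha = x_\frak p + y_\frak p\sqrt{-1}$, multiplicativity in the second argument gives
$$\left(\frac{\alpha,2}{v}\right)_4 = \left(\frac{\alpha,2p}{v}\right)_4 \cdot \left(\frac{\alpha,p}{v}\right)_4^{-1}.$$
The second factor falls directly under the hypotheses of Lemma \ref{3+d} with $d = p$ (the assumption $p \equiv 3 \mod 8$ is exactly what places us in its scope), so it equals $1$. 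It therefore remains to show the first factor equals $-1$.

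For the first factor, the identity $2p = (\sqrt{2p})^2$ collapses the $4$-th Hilbert symbol to a quadratic one,
$$\left(\frac{\alpha,2p}{v}\right)_4 = \left(\frac{\alpha,\sqrt{2p}}{v}\right),$$
and since $\sqrt{2p}\in F_\frak p$, the projection formula then transfers it down to $F_\frak p$:
$$\left(\frac{\alpha,\sqrt{2p}}{v}\right) = \left(\frac{N_{E_v/F_\frak p}(\alpha),\sqrt{2p}}{\frak p}\right) = \left(\frac{\varepsilon_F,\sqrt{2p}}{\frak p}\right).$$
I will evaluate this last quadratic symbol by global Hilbert reciprocity on the pair $(\varepsilon_F,\sqrt{2p})\in F^*\times F^*$. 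All archimedean contributions vanish because $N_{F/\Q}(\varepsilon_F) = 1$ (so $\varepsilon_F$ is totally positive, via the hypothesis that $x^2-2py^2=\pm 2$ is solvable and \cite{Per}); and at every finite place of $F$ not above $2p$, both $\varepsilon_F$ and $\sqrt{2p}$ are units in a non-dyadic local field, forcing the quadratic symbol to be trivial. Only two local symbols survive, namely at $\frak p$ (above $2$) and at the unique place $\frak p_p$ of $F$ above $p$, so reciprocity yields
$$\left(\frac{\varepsilon_F,\sqrt{2p}}{\frak p}\right) = \left(\frac{\varepsilon_F,\sqrt{2p}}{\frak p_p}\right)^{-1}.$$
The right-hand side equals $-1$ by the computation already carried out inside the proof of Lemma \ref{computation2} specialized to $d = p$, where the chain $(\varepsilon_F,\sqrt{2p})_{\frak p_p}^{i_0} = (\eta,\sqrt{2p})_{\frak p_p}$ together with $(\eta,\sqrt{2p})_{\frak p_p} = (2,p)_p = -1$ (using $p\equiv 3\mod 8$) does the job. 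Combining, $(\alpha,2)_{v,4} = (-1)\cdot 1^{-1} = -1$.

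The main obstacle is the reciprocity bookkeeping in the middle step: one has to confirm total positivity of $\varepsilon_F$, triviality of $(\varepsilon_F,\sqrt{2p})$ at all tame places outside $\{\frak p,\frak p_p\}$, and that the proof of Lemma \ref{computation2} really delivers the value of $(\varepsilon_F,\sqrt{2p})_{\frak p_p}$ and not merely the $4$-th symbol of $\alpha$. All three are routine consequences of the hypotheses and of the displayed equalities inside the proof of Lemma \ref{computation2}, so no fundamentally new argument is needed.
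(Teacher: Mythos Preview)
Your decomposition $(\alpha,2)_{v,4} = (\alpha,2p)_{v,4}\cdot(\alpha,p)_{v,4}^{-1}$, the appeal to Lemma~\ref{3+d} for the second factor, and the reduction of the first factor to $\left(\frac{\varepsilon_F,\sqrt{2p}}{\frak p}\right)$ (at the prime above $2$) are exactly the paper's opening moves. The only divergence is in how this last quadratic symbol is evaluated.

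The paper stays at $\frak p$ and computes directly: since $i_0$ is odd (Lemma~\ref{computation2}) and $\eta=\varepsilon_F^{i_0}=\omega^2/2$, one gets
\[
\left(\frac{\varepsilon_F,\sqrt{2p}}{\frak p}\right)
=\left(\frac{\omega^2/2,\sqrt{2p}}{\frak p}\right)
=\left(\frac{2,\sqrt{2p}}{\frak p}\right)
=\left(\frac{2,-2p}{2}\right)=-1.
\]
You instead invoke global Hilbert reciprocity to transport the computation to the prime $\frak p_p$ above $p$, where the desired value $\left(\frac{\varepsilon_F,\sqrt{2p}}{\frak p_p}\right)=-1$ is already recorded inside the proof of Lemma~\ref{computation2}. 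Your reciprocity bookkeeping is correct (total positivity of $\varepsilon_F$ from $N_{F/\Q}(\varepsilon_F)=1$, and tame triviality away from $2p$), so the argument is valid. The paper's route is slightly more economical, avoiding the global step and keeping the proof purely local at $\frak p$; your route has the virtue of recycling the exact symbol already computed in Lemma~\ref{computation2} rather than redoing an analogous calculation at a different prime.
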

\begin{proof} The Hilbert symbol
$$\aligned\left(\frac{x_\frak p+y_\frak p\sqrt{-1}, 2}{v}\right)_{4}
& = \left(\frac{x_\frak p+y_\frak p\sqrt{-1},2p}{v}\right)_{4}\cdot\left(\frac{x_\frak p+y_\frak p\sqrt{-1},p}{v}\right)^{-1}_{4}\\
& = \left(\frac{x_\frak p+y_\frak
p\sqrt{-1},\sqrt{2p}}{v}\right)\cdot
1=\left(\frac{\varepsilon_F,\sqrt{2p}}{\frak p}\right)\endaligned$$
where the second equation holds by Lemma \ref{3+d}. Recall
$\omega=x_0+y_0\sqrt{2d}$ and $\eta=\omega^2/2$, here $x_0,y_0\in
\Z$ satisfy $x_0^2-2py_0^2=-2$. Since $\eta=\varepsilon_F^{i_0}$ for
some $i_0\in \Z$, we have $i_0$ is odd by Lemma \ref{computation2}.
So we have
$$\aligned\left(\frac{x_\frak p+y_\frak p\sqrt{-1}, 2}{v}\right)_{4}&=
\left(\frac{\varepsilon_F,\sqrt{2p}}{\frak p}\right)=\left(\frac{\omega^2/2,\sqrt{2p}}{\frak p}\right)\\
&=\left(\frac{2,\sqrt{2p}}{\frak
p}\right)=\left(\frac{2,-2p}{2}\right)=-1.
\endaligned$$
\end{proof}

Using a similar argument as in the proof of Theorem \ref{thm-2d},
the result follows from Lemma \ref{3+d} and \ref{2-3}.
\begin{prop}\label{d} Let $p$ be a prime and $F=\Q(\sqrt{2p})$. If $p\equiv 3 \mod 8$,
then the diophantine equation $x^2+y^2=\alpha$ is solvable over
$\frak o_F$ if and only if $\alpha$ satisfies the Artin condition of
$\Theta$ and $H_L$, where $\Theta=E(\sqrt[4]{2})$ and $H_L$ is the
ring class field corresponding to the order $L=\frak o_{F}+ \frak
o_{F} \sqrt{-1}$.
\end{prop}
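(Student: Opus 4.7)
The plan is to mirror the proof of Theorem \ref{thm-2d}, applying Proposition \ref{multiple} with $K_1 = \Theta = E(\sqrt[4]{2})$ and $K_2 = H_L$. First I would observe that the Kummer extension $\Theta/E$ is unramified at every place of $E$ not lying over $2$: at finite places $v \nmid 2$ because $2$ is a unit there and the residue characteristic is coprime to $4$, and at archimedean places because $E \supset \Q(\sqrt{-1})$ is totally imaginary. Consequently only the unique prime $v_1$ of $E$ above $2$ carries a nontrivial local norm constraint; at every other place $v$ the inclusion $\lambda_E(\bold T(\frak o_{F_\frak p})) \subseteq N_{\Theta_w/E_v}(\Theta_w^\times)$ is automatic, since norm-$1$ elements in $L_{\frak p}^\times$ are units.

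For well-definedness of $\widetilde\lambda_E$ I would verify $(\xi, 2)_{4, v_1} = 1$ for every $\xi \in L_{\frak p_1}^\times$ with $N_{E_{v_1}/F_{\frak p_1}}(\xi) = 1$. The remark following Corollary \ref{er} supplies a local integral solution $(x_1, y_1) \in \frak o_{F_{\frak p_1}}^2$ of $x^2 + y^2 = \varepsilon_F$; setting $\xi_0 = x_1 + y_1 \sqrt{-1}$, the product $\xi \xi_0 \in L_{\frak p_1}^\times$ has norm $\varepsilon_F$, so two applications of Lemma \ref{2-3} give $(\xi_0, 2)_{4, v_1} = -1 = (\xi\xi_0, 2)_{4, v_1}$, and bimultiplicativity of the Hilbert symbol yields $(\xi, 2)_{4, v_1} = 1$. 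Note that Lemma \ref{2-3} itself rests on Lemma \ref{3+d} for the reduction to the $(\cdot, \sqrt{2p})$ symbol, so both cited lemmas enter the argument.

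For injectivity I would take $u \in \ker \widetilde\lambda_E$ and write $\lambda_E(u) = \beta i$ with $\beta \in E^\times$ and $i \in \prod_{\frak p \le \infty} L_\frak p^\times$. Then $N_{E/F}(\beta) = N_{E/F}(i)^{-1} \in \frak o_F^\times$, and total positivity of norms from the totally imaginary extension $E/F$ combined with $N_{F/\Q}(\varepsilon_F) = 1$ (which makes $\varepsilon_F$ itself totally positive) force $N_{E/F}(\beta) = \varepsilon_F^n$ for some $n \in \Z$. Assume for contradiction that $n$ is odd. At every $v$ of $E$ not above $2$ the factor $\psi_{\Theta/E}(i_v)$ is trivial by the unramifiedness and the fact that $i_v$ is a unit. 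At $v_1$, the element $i_{v_1} \xi_0^n$ lies in $L_{\frak p_1}^\times$ with norm $1$, so well-definedness gives $(i_{v_1}\xi_0^n, 2)_{4, v_1} = 1$, whence $(i_{v_1}, 2)_{4, v_1} = (\xi_0, 2)_{4, v_1}^{-n} = (-1)^{-n} = -1$ by Lemma \ref{2-3} and the parity of $n$. This contradicts $u \in \ker \widetilde\lambda_E$, so $n$ is even, and the substitution $(\beta, i) \mapsto (\beta \varepsilon_F^{n/2}, i\varepsilon_F^{-n/2})$ produces representatives in $T(F) \times \prod \bold T(\frak o_{F_\frak p})$.

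The main obstacle I anticipate is the Hilbert-symbol bookkeeping at $v_1$: both relevant lemmas are phrased for $x^2 + y^2 = \varepsilon_F$ rather than $x^2 + y^2 = 1$, so the auxiliary element $\xi_0$ must be introduced carefully to bridge between the norm-$1$ regime (needed for well-definedness) and the norm-$\varepsilon_F^{-n}$ regime (needed for injectivity); once this reduction is in place, the remaining steps parallel the proof of Theorem \ref{thm-2d} exactly.
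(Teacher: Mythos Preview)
Your proposal is correct and follows essentially the same route as the paper: the paper simply states that the result follows from Lemma \ref{3+d} and Lemma \ref{2-3} by the argument of Theorem \ref{thm-2d}, and your use of the auxiliary element $\xi_0$ to reduce the norm-$1$ and norm-$\varepsilon_F^{-n}$ computations to two applications of Lemma \ref{2-3} is exactly the kind of bookkeeping the paper is leaving implicit. The only cosmetic difference is that the paper would likely obtain the norm-$1$ vanishing $(\xi,2)_{4,v_1}=1$ by rerunning the chain of equalities in the proof of Lemma \ref{2-3} (with Lemma \ref{3-2d} in place of Lemma \ref{3+d} and $(1,\sqrt{2p})_{\frak p}=1$ at the end), whereas you deduce it by multiplicativity from the norm-$\varepsilon_F$ case; both arguments rest on the same two lemmas.
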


Now we use Proposition  \ref{d} to give an explicit example.

\begin{exa} Let
$F=\Q(\sqrt{6})$. We write
$N_{F/\Q}(\alpha)=2^{s_1}3^{s_2}p_1^{e_1}\cdots p_g^{e_g}$ for any
$\alpha=a+b\sqrt{6}$, here $a,b \in \Z$. Let $D(\alpha)=\{p_1, \cdots,
p_g \}$. Denote
$$\aligned& D_1=\{p\in D(\alpha): \left(\frac{-1}{p}\right)=\left(\frac{6}{p}\right)=1 \text{ and }
\left(\frac{2}{p}\right)=-1 \}\cr & D_2= \{p\in D(\alpha):
\left(\frac{-1}{p}\right)=-\left(\frac{6}{p}\right)=1 \text{ and } \left(\frac{2}{p}\right)=-1\}\\
& D_3= \{p\in D(\alpha):
\left(\frac{-1}{p}\right)=\left(\frac{6}{p}\right)=1\text{ and
}\left(\frac{2}{p}\right)_4=-1\}.
\endaligned $$
It is easy to see that $e_i$ is even for $p_i\in D_2$.

Then $x^2+y^2=\alpha$ is solvable over $\frak
o_F$ if and only if
\begin{enumerate}[(1)]
\item The equation $x^2+y^2=\alpha$ has integral solutions at every
place of $F$.

\item $D_1\neq \emptyset$, or $2\mid a$, or
$$\sum_{p_i\in D_2}e_i/2+\sum_{p_i\in
D_3}e_i\equiv \begin{cases} 0 \mod 2 \ \ \ & \text{if }a\equiv \pm
1 \mod 8 \cr
 1\mod 2 \ \ \ & \text{if }a\equiv \pm 3 \mod 8 \end{cases}$$
for $D_1=\emptyset$ and $2\nmid a$.
\end{enumerate}
\end{exa}

\bf{Acknowledgment} \it{The author thanks the referee for many useful suggestions. The work is supported by the Morningside
Center of Mathematics and grant DE 1646/2-1 of the Deutsche Forschungsgemeinschaft. The author is supported by  NSFC, grant \# 10901150 and 973 Program 2013CB834202.}

\begin{bibdiv}
\begin{biblist}

\bib{CTX} {article} {
    author={J-L. Colliot-Th\'el\`ene},
    author={F. Xu},
    title={Brauer-Manin obstruction for integral points of homogeneous spaces and
         representations by integral quadratic forms},
    journal={Compositio Math.},
    volume={145}
    date={2009},
    Pages={309\ndash 363},
}



\bib{Ha08} {article} {
    author={D. Harari},
 title={Le d\'{e}faut d'approximation forte pour les groups alg\'{e}briques commutatifs},
  journal={Algebra and Number Theory},
    volume={2},
      date={2008},
    pages={595\ndash 611},
    number={5}
 }

\bib{Ko} {article} {
    author={O. K\"{o}rner},
 title={Integral representations over local fields and the number of genera of quadratic forms},
  journal={Acta Arith.},
    volume={24},
      date={1973},
    pages={301\ndash 311}
 }


\bib{Milne98}{book}{
    author={ J. S. Milne},
     title={Algebraic geometry},
       volume={ },
     publisher={World Scientific Publishing Co.},
     place={},
      date={1998},
   journal={ },
    series={},
    volume={},
    number={ },
}

\bib{Nag1} {article} {
    author={T. Nagell},
 title={On the representations of integers as the sum of two
integral squares in algebraic, mainly quadratic fields},
  journal={Nova Acta Soc. Sci. Upsal. (4)},
    volume={15},
      date={1953},
    pages={77pp},
    number={11}
}

\bib{Nag2} {article} {
    author={T. Nagell},
 title={On the sum of two integral squares in certain quadratic fields},
  journal={Ark. Mat.},
    volume={4},
      date={1961},
    pages={267\ndash 286},
    number={}
}


\bib{Niv} {article} {
    author={I. Niven},
 title={Integers of quadratic fields as sums of squares},
  journal={Trans. Amer. Math. Soc.},
    volume={48},
      date={1940},
    pages={405\ndash 417},
    number={3}
}


\bib{Pe} {article} {
    author={M. Peters},
 title={Die Stufe von Ordnungen ganzer Zahlen in algebraischen Zahlk\"{o}rpern},
  journal={Math.Ann.},
    volume={195},
      date={1972},
    pages={309\ndash 314},
    number={}
}


\bib{Per}{book}{
    author={O. Perron},
     title={Die Lehre von den Kettenbruchen},
     publisher={Chelsea Publishing Co.},
     place={},
      date={1929},
    volume={ },
    number={ },
}


%

\bib{Yo}{article} {
     author={H. Yokoi},
     title={Solvability of Diophantine equation $x^2-Dy^2=\pm 2$ and new
invariants for real quadratic fields},
     journal={Nagoya Math. J.},
     volume={134},
     date={1994}
     Pages={137\ndash149},
}

\bib{Wei}{article}{
    author={D. Wei},
    title={On the sum of two squares in quadratic fields $\Bbb
    Q(\sqrt{\pm p})$}
    journal={Acta Arith.},
    volume={147},
    number={3}
      date={2011},
    pages={253\ndash260},
 }

\bib{Wei1} {article} {
    author={D. Wei},
    title={On the diophantine equation $x^2-Dy^2=n$},
    journal={Sci. China Math.},
    volume={56},
    number={2},
      date={2013},
    pages={227\ndash238},
}

\bib{Wei2} {article} {
    author={D. Wei},
    title={On the sum of two integral squares in the imaginary quadratic field $\Q(\sqrt{-2p})$},
    journal={Sci. China Math. (to appear)}
}

\bib{WX} {article} {
    author={D. Wei},
    author={F. Xu},
 title={Integral points for multi-norm tori},
  journal={Proc. London Math. Soc.},
    volume={104},
    number={5}
      date={2012},
    pages={1019\ndash1044},
 }

\bib{WX2} {article} {
    author={D. Wei},
    author={F. Xu},
    title={Integral points for groups of multiplicative type},
    journal={Adv. Math.}
    volume={232},
    number={1}
      date={2013},
    pages={36\ndash56},
 }

\end{biblist}
\end{bibdiv}

\end{document}